\documentclass[12pt]{amsart} 

\usepackage{amsmath}
\usepackage{amssymb}
\usepackage{mathtools}
\usepackage[utf8]{inputenc}
\usepackage{tikz-cd}
\usepackage{hyperref}

\newtheorem{theorem}{Theorem}[section]
\newtheorem{corollary}[theorem]{Corollary}
\newtheorem{proposition}[theorem]{Proposition}
\newtheorem{lemma}[theorem]{Lemma}

\theoremstyle{definition}
\newtheorem{definition}[theorem]{Definition}
\newtheorem{example}[theorem]{Example}

\theoremstyle{remark}


\DeclareMathOperator{\Pers}{\mathrm {Mor(Pers)}}
\DeclareMathOperator{\match}{\mathrm {Match}}

\DeclareMathOperator{\id}{\mathrm {id}}

\DeclareMathOperator{\im}{\mathrm {im}}

\DeclareMathOperator{\coker}{\mathrm {coker}}

\newcommand{\xto}{\xrightarrow}

\newcommand{\cech}{\check{\mathcal C}}
\newcommand{\rstar}{[0,\infty]}
\newcommand{\rnonneg}{[0,\infty)}
\newcommand{\upper}{E}

\newcommand{\CC}{\mathcal C}
\newcommand{\DD}{\mathcal D}

\begin{document}

\title{Sparse Filtered Nerves}
\author{Nello Blaser}
\author{Morten Brun}

\begin{abstract}
  Given a point cloud \(P\) in Euclidean space and a positive
  parameter 
  \(t\) we 
  can consider the \(t\)-neighborhood \(P^{t}\) of \(P\)
  consisting 
  of points at distance less than \(t\) to \(P\). Homology of
  \(P^{t}\) gives information about components, holes, voids
  etc. in \(P^{t}\). The idea
  of persistent homology is that it may
  happen that we are interested in some of holes in the spaces \(P^t\)
  that are not detected simultaneously in homology for a single value of \(t\), but
  where each of these holes is detected for 
  \(t\) in a wide range.

  When the dimension of the ambient Euclidean space is small,
  persistent homology is efficiently computed by the
  \(\alpha\)-complex \cite{alpha}. For dimension bigger than three
  this becomes resource consuming. Don Sheehy discovered
  \cite{Sheehy2013} that 
  there exists a filtered simplicial complex whose size depends linearly
  on the 
  cardinality of \(P\) and whose persistent homology is an approximation
  of the persistent homology of the filtered topological space
  \(\{P^{t}\}_{t \ge 0}\). In this
  paper we pursue Sheehy's sparsification approach
  and give a more general approach to sparsification of
  filtered simplicial complexes computing the homology of filtered
  spaces of the form \(\{P^{t}\}_{t \ge 0}\) and more generally to
  sparsification of 
  filtered Dowker nerves. To our best knowledge, this is the first approach to 
  sparsification of general Dowker nerves.
\end{abstract}

\maketitle

\section{Introduction}
\label{sec:intro}

In data analysis, we often want to quantify an underlying shape of data. For example, in cluster analysis the hypothesis is
that data is concentrated in certain regions and in linear regression the
hypothesis is that data is concentrated along a line. The main purpose
of 
topological data analysis is to discover and quantify more complicated
shapes like circles and spheres. 
Persistent homology is the preferred tool for this.

For example, 3D-scanning gives a sample of points on the surface of a
solid object in three dimensional space. The homology of this
surface contains information about the underlying solid object. Under
certain conditions persistent homology of the finite point sample
allows us to infer the homology of the surface we sample from \cite{Niyogi2008}. 

There are several versions of 
persistent homology of a point sample \(P\)
in Euclidean space \(E\).
The filtered \v Cech complex gives the homology \(H^{t} =
H_*(P^{t})\) of the 
spaces 
\[P^{t} = \{x \in E \, \mid \, d(x,P) < t\}\]
together with the homomorphisms \(H^{t} \to H^{t'}\) for
\(t \le t'\). In \cite{Niyogi2008} it is shown
that under 
favourable 
circumstances, if the point sample \(P\) is drawn from a compact
submanifold \(M\) of \(E\), then the homology of \(M\) can be inferred
from the \v Cech homology of \(P\).
Another version of persistent homology is based on the filtered Rips
complex of \(P\). This filtered simplicial complex is constructed from
the pairwise distances
between points in the sample \(P\). Given \(t > 0\), the
abstract simplicial complex \(R_{t}(P)\) is the
clique complex of the (simple undirected) graph with the set \(P\) as
nodes and with edges given by pairs of points in \(P\) of distance less than \(t\).
That is, given \(t > 0\), a subset \(\sigma\) of \(P\) is
contained in \(R_{t}(P)\) if and only the set of pairwise distances
between points in \(\sigma\) is strictly bounded above by \(t\).
Latschev \cite{Latschev2001} has shown that under certain
conditions, the homology of \(M\) can be inferred from the persistent
homology of this filtered simplicial complex.

Unfortunately, the sizes of both the Rips and of the \v Cech complex
explode when the number of points in the sample \(P\) grows.
Traditionally this is addressed in two ways. The \(\alpha\) complex has
the same persistent homology as the \v Cech complex and if the ambient Euclidean
space is of dimension at most three, then it is so small
that it is practical to compute even when \(P\) consists of millions of
points. On the other hand, discrete Morse theory allows us to replace the
Rips complex by a much smaller complex with the same persistent
homology, so that it is practical to compute persistent Rips homology
for tens of thousands of points \cite{10.1007/978-3-319-04099-8_7}. 
Sheehy's approximations to \v Cech and Rips complexes
\cite{Sheehy2013, SRGeom} can be used to  
push the limits on the number of data points. The sizes of these
approximations grow linearly in the
number of sample points. However it depends on constants that grow
exponentially in the dimension of the ambient Euclidean space.

In this work we follow the approach of Sheehy and Cavanna, Jahanseir
and Sheehy \cite{Sheehy2013, SRGeom}.
We modify their approximations so that they can be applied to
the Dowker nerve of arbitrary dissimilarities, that is,
functions of the form
\begin{displaymath}
  \Lambda \colon L \times W \to \rstar.
\end{displaymath}
The Dowker nerve of \(\Lambda\) is a filtered simplicial complex \(N(\Lambda)\) with vertex
set \(L\). Given \(t > 0\) and \(w \in W\), every finite subset of the set
\begin{displaymath}
  \{l \in L \, \mid \, \Lambda(l,w) < t\}
\end{displaymath}
is a face of the simplicial complex \(N_t(\Lambda)\), and
\(N_t(\Lambda)\) is the smallest simplicial complex containing
these faces.

Note that if \(L\) is a subset of a metric space \(W\) and \(\Lambda
\colon L \times W \to \rstar\)
is given by the metric, then
\(N_t(\Lambda)\) can be described as
the {\em ambient
  \v{C}ech complex \(\cech_t(L, W)\)}. That is, it is the nerve of the set of
\(t\)-balls in \(W\) centred at points in \(L\) considered as a cover
of the union of such balls.
Considering \(L\) as a metric space with
the induced metric, we call \(\cech_t(L,L)\) the {\em intrinsic
  \v{C}ech complex of \(L\)}.  From the perspective of computer implementation
these relative \v{C}ech complexes and their Dowker counterparts have
the defect that they grow rapidly when the size of \(L\) increases. In
order to mitigate this Sheehy, Botnan--Spreemann and
Cavanna--Jahanseir--Sheehy proposed sparse approximations to
\(\cech_t(L, W)\) in the situation where \(W = \mathbb{R}^d\) equipped
with a convex metric and \(L\) is a finite subset of \(W\)
\cite{Sheehy2013, botnan15approximating, SRGeom}.  Inspired by their
work, in \cite{SparseDowker}, we constructed sparsifications of nerves
of dissimilarities satisfying the triangle inequality. In this
paper we construct sparsifications of arbitrary
dissimilarities, that is, arbitrary functions of the form
\begin{displaymath}
  \Lambda \colon L \times W \to \rstar.
\end{displaymath}
In the situation where \(L\) and \(W\) are finite
and all the values \(\Lambda(l,w)\) for \((l,w) \in L \times W\) are
stored in memory these sparsifications can be implemented on a
computer in a direct way.

  Our sparsification can be considered as  
  consisting of two parts.  In the first part we replace a metric by a
  Dowker dissimilarity whose nerve is an approximation of the
  \v{C}ech nerve of the original metric.  In the second part we replace
  the nerve of a Dowker dissimilarity by a smaller homotopy equivalent
  filtered simplicial complex. This filtered simplicial complex is the
  smallest member of a class of sparsifications including the ones in
  \cite{SRGeom} and \cite{SparseDowker}. These two parts are
  intimately related and it is not evident how to combine them to
  obtain a smallest possible sparsification.

The intrinsic and the ambient \v{C}ech complexes are related by the
inclusions
\begin{displaymath}
  \cech_t(L,L) \subseteq \cech_t(L,W) \subseteq \cech_{2t}(L,L),
\end{displaymath}
so their corresponding persistent homologies are multiplicatively
\(2\)-interleaved. The ambient \v{C}ech complex has homotopy type
given by the sublevel filtration for the function
\(f \colon W \to [0,\infty)\) whose value on a point in \(W\) is its
distance to \(L\). If \(L\) is contained in a metric subspace \(N\) of
\(W\), then the persistent homologies of \(\cech(L,W)\) and
\(\cech(N,W)\) are additively interleaved with interleaving factor
given by the Hausdorff distance \(d_H(L,N)\) between \(L\) and \(N\).
Moreover, the persistent homologies of \(\cech(L,L)\) and
\(\cech(N,N)\) are additively interleaved with the factor \(2d_H(L,N)\), that is,
two times the Hausdorff distance between \(L\) and \(N\). Thus the
persistent homologies of both \(\cech(L,L)\) and \(\cech(L,W)\) can be
considered as an approximations to the intrinsic \v{C}ech homology of
\(N\). In particular if \(W\) is a Riemannian manifold with distance
given by the geodesic metric, then the persistent homologies of
\(\cech(L,W)\) and \(\cech(W,W)\) are additively interleaved with
interleaving factor given by the Hausdorff distance \(d_H(L,W)\)
between \(L\) and \(W\).  At filtration values up to the convexity
radius of \(W\) the persistent homology \(\cech(W,W)\) is isomorphic
to the homology of \(W\). (See e.g. \cite[Section 6.5.3]{MR2002701})

All other sparsification strategies that we are aware of, explicitly
sparsify \v{C}ech or Rips complexes and leverage the metric structure
of the underlying space in order to sparsify. As already discussed,
Sheehy took advantage of the fact that the Euclidean metric is a
doubling metric \cite{Sheehy2013,SRGeom}. Choudhary and others
recently suggested an approximation based on discretizing Euclidean
space \cite{DBLP:journals/corr/abs-1812-04966}. Other approximations
work for Euclidean space \cite{botnan15approximating} or for Rips
\cite{simba} or weighted Rips complexes \cite{buchet16efficient}. We
have previously shown that a similar approach as Sheehy used could be
extended to Dowker dissimilarities that satisfy a triangle inequality
\cite{SparseDowker}. Here we present our first sparsification approach
applicable to general Dowker nerves.

This paper is organized as follows. Section~\ref{sec:preliminaries}
introduces the reader to the basic concepts used throughout the
remaining sections. 
In our previous paper \cite{SparseDowker}, we did not explain how
interleavings with respect to translation functions (see
Definition~\ref{translationfunction}) are related to matchings. Since
this is crucial to the interpretation of persistence diagrams of
sparse nerves we discuss this in Section~\ref{sec:matching}.  
In Section~\ref{sec:truncated}, we introduce 
truncation of Dowker nerves and give a direct argument 
showing that the truncated Dowker nerve is interleaved with the
Dowker nerve of the original Dowker dissimilarity. 
In Section~\ref{sec:dnerves}, 
we sparsify Dowker nerves in a way that preserves homotopy type. In
particular, persistent homology
does not change under sparsification. This sparsification is obtained
via a function \(R \colon L \to \rstar\) having certain
properties. 
We call functions that satisfy these properties {\em restriction
functions}. With the concept of restriction
functions at hand we display the smallest restriction
function relative to a parent function \(\varphi\), the {\em
  \((\Lambda, \varphi)\)-restriction}. In the paper \cite{appliedpaper} 
, we give a short description of
details behind our
python package for computation of persistent homology of sparsified
Dowker nerves.

\section{Preliminaries}
\label{sec:preliminaries}

\subsection{Filtratrations}
\label{sec:filtrations}
We consider the interval \(\rstar\) as a category with the underlying
set of the interval as objects and with a morphism \(s \to t\) if and
only if \(s \le t\).
\begin{definition}
  Let \(\CC\) be a category. The {\em category of filtered objects in
    \(\CC\)} is the category of functors from \(\rstar\) to
  \(C\). That is, a filtered object in \(\CC\) is a functor
  \(C \colon \rstar \to \CC\) and a morphism \(f \colon C \to C'\) of
  filtered objects in \(\CC\) is a natural transformation.
\end{definition}

Recall that a function \(\alpha \colon \rnonneg \to \rnonneg\) is {\em
  order preserving} if it is a functor, that is, if \(s
\le t\) implies 
\(\alpha(s) \le \alpha(t)\).
\begin{definition}\label{generalizedinverse}
  Let \(\beta \colon \rnonneg \to \rnonneg\) be an order preserving
  function with \(\lim_{t \to \infty}\beta(t) = \infty\). The {\em generalized
    inverse} of \(\beta\) is the order preserving function
  \begin{displaymath}
    \beta^{\leftarrow} \colon \rnonneg \to \rnonneg, \quad
    \beta^{\leftarrow}(t) = \inf \{ s \in \rnonneg \, \mid \, \beta(s) \ge t\}
  \end{displaymath}
  with the defining property
  \begin{displaymath}
    \beta^{\leftarrow} (t) \le s \text{ if and only if } t \le \beta(s).
  \end{displaymath}
\end{definition}
In categorical language, the generalized of \(\beta\) is its left
adjoint functor. 
\begin{definition}\label{translationfunction}
  A {\em translation function} is an order preserving function
  \(\alpha \colon \rnonneg \to \rnonneg\) with \(t \le \alpha(t)\) for
  every \(t \in \rnonneg\).
\end{definition}
In categorical language, a translation function is a functor under the
identity.

We will often allow ourselves to evaluate a translation \(\alpha\) at
\(\infty\) by letting \(\alpha(\infty) = \infty\). 
\begin{definition}
  Given a filtered object \(C \colon \rstar \to \CC\) and a
  translation function \(\alpha \colon \rnonneg \to \rnonneg\), the
  {\em pull-back of \(C\) along \(\alpha\)} is the filtered object
  \(\alpha^* C = C \circ \alpha\) with
  \((\alpha^* C)(t) = C({\alpha(t)})\). The {\em \(\alpha\)-unit} of
  \(C\) is the morphism \(\alpha_{*C} \colon C \to \alpha^*C\) with
  \[\alpha_{*C}(t) = C(t \to \alpha(t)) \colon C(t) \to (\alpha^*C)(t) =
  C(\alpha(t)).\] 
\end{definition}
\begin{definition}
  Let \(k\) be a field.  The category of {\em persistence modules over
    \(k\)} is the category of filtered objects in the category of
  vector spaces over \(k\).
\end{definition}
\begin{definition}
  Let \(k\) be a field and let \(\alpha \colon \rnonneg \to \rnonneg\)
  be a translation function.  A persistence module \(V\) over \(k\) is
  {\em \(\alpha\)-trivial} if the \(\alpha\)-unit of \(V\) is trivial,
  that is, if \(\alpha_{*V} = 0\).
\end{definition}

\subsection{Dissimilarities}

Our presentation of this preliminary material closely follows \cite{SparseDowker}.
\begin{definition}[Dowker \cite{MR0048030}]
  The {\em nerve} of a relation \(R \subseteq X \times Y\) is the
  simplicial complex
  \begin{displaymath}
    NR = \{ \text{ finite } \sigma \subseteq X \, \mid \, \exists
    \text{ \(y \in Y\) with \((x,y) \in R\) for all \(x \in
      \sigma\)}\}.  
  \end{displaymath}
\end{definition}

The following definition is inspired by the concept of networks as it
appears in \cite{2016arXiv160805432C}.
\begin{definition}\label{filtereddowkermorphism}
  A {\em dissimilarity} \(\Lambda\) consists of two sets \(L\)
  and \(W\) and a function \(\Lambda \colon L \times W \to \rstar\).
  Given \(t \in \rstar\), we let \(\Lambda_t \subseteq L \times W\) be
  the relation
  \begin{displaymath}
    \Lambda_t =
    \{(l,w) \in L \times W \, \mid \, \Lambda(l,w) < t\}.
  \end{displaymath}
\end{definition}

\begin{definition}
  Let \(\Lambda \colon L \times W \to \rstar\) be a
  dissimilarity.  The {\em Dowker Nerve} \(N \Lambda\) of \(\Lambda\)
  is the filtered simplicial complex with vertex set \(L\) and the nerve
  \(N \Lambda_t\) of the relation \(\Lambda_t\) in filtration degree
  \(t \in [0, \infty]\).
\end{definition}
\begin{definition}
  A {\em morphism} \(C \colon \Lambda \to \Lambda'\) of 
  dissimilarities \(\Lambda \colon L \times W \to [0,\infty]\) and
  \(\Lambda' \colon L' \times W' \to [0,\infty]\) consists of a
  relation
  \[C \subseteq L \times L'\] so that for every \(t \in [0,\infty]\)
  and for every \(\sigma \in N\Lambda_t\), the set
  \begin{displaymath}
    NC(\sigma) = \{ l' \in L' \, \mid \, (\sigma \times
    \{l'\}) \cap C 
    \text{ is non-empty} \}
  \end{displaymath}
  is non-empty and contained in \(N\Lambda'\).  If
  \(C \subseteq L \times L'\) and \(C' \subseteq L' \times L''\) are
  morphisms \(C \colon \Lambda \to \Lambda'\) and
  \(C' \colon \Lambda' \to \Lambda''\) of dissimilarities, then
  the composition
  \begin{displaymath}
    C'C \colon \Lambda \to \Lambda''
  \end{displaymath}
  is the subset of \(L \times L''\) defined by
  \begin{displaymath}
    C'C = \{ (l,l'') \, \mid \text{there exists } l' \in L' \text{
      with } (l,l') \in C \text{ and } (l',l'') \in C'\}.
  \end{displaymath}
  The identity morphism \(\Delta_L \colon N\Lambda \to N\Lambda\) is
  \begin{displaymath}
    \Delta_L = \{(l,l) \, \mid \, l \in L\} \subseteq L \times L.
  \end{displaymath}

\end{definition}
\begin{proposition}
  The Dowker nerve is functorial in the sense that it induces a
  functor \(N\) from the category of dissimilarities to the
  category of functors from \([0, \infty]\) to the category
  of topological spaces.
\end{proposition}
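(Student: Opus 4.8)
The plan is to construct the functor $N$ by hand and then verify the functoriality axioms, the one genuinely delicate point being how a morphism --- which is only a \emph{relation} $C\subseteq L\times L'$ --- yields a continuous map. For a dissimilarity $\Lambda\colon L\times W\to\rstar$ I would let $N\Lambda$ be the filtered space sending $t$ to the geometric realization $|N\Lambda_t|$ and sending $s\le t$ to the inclusion $|N\Lambda_s|\hookrightarrow|N\Lambda_t|$ coming from the inclusion of simplicial complexes $N\Lambda_s\subseteq N\Lambda_t$, which holds since $\Lambda_s\subseteq\Lambda_t$.

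The key observation is that, although $C$ is merely a relation, the assignment $\sigma\mapsto NC(\sigma)$ is strictly functorial. For each $t$ the defining condition on morphisms says precisely that $\sigma\mapsto NC(\sigma)$ carries every face of $N\Lambda_t$ to a nonempty face of $N\Lambda'_t$, and since $NC(\sigma)=\bigcup_{l\in\sigma}\{l'\mid(l,l')\in C\}$ this assignment is visibly monotone; so it is an order preserving map of face posets $NC_t\colon\operatorname{Face}(N\Lambda_t)\to\operatorname{Face}(N\Lambda'_t)$. It depends on $t$ only through the level constraints, so the maps $NC_t$ commute with the face poset inclusions induced by $s\le t$. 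Unwinding the definition of composition of morphisms gives $N(C'C)(\sigma)=NC'(NC(\sigma))$ and $N\Delta_L(\sigma)=\sigma$, so $C\mapsto(NC_t)_t$ is a functor from dissimilarities to the category of filtered posets (functors $\rstar\to\mathrm{Poset}$).

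Finally I would realize this topologically. The order complex functor sends a monotone map of face posets to a simplicial map; because $\operatorname{Ord}(\operatorname{Face}(K))$ is the barycentric subdivision $\operatorname{sd}K$, whose realization is canonically homeomorphic to $|K|$ compatibly with subcomplex inclusions, this produces from $(NC_t)_t$ a natural transformation of filtered spaces $N\Lambda\to N\Lambda'$. Since the order complex functor and geometric realization are themselves functorial, composing them with the functor $\Lambda\mapsto(NC_t)_t$ of the previous paragraph yields the desired functor $N$ to $\Fun(\rstar,\topsp)$.

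I expect the only real obstacle to be resisting the temptation to realize $C$ directly as a simplicial map $N\Lambda_t\to N\Lambda'_t$ by choosing, for each vertex $l$, a representative vertex inside $NC(\{l\})$ (for instance a minimum for some well ordering of $L'$, or a barycenter): no such choice is compatible with composition of relations, so functoriality would fail. Routing the construction through the strictly functorial face poset map $\sigma\mapsto NC(\sigma)$ and only afterwards passing to spaces via subdivision is exactly what repairs this, after which monotonicity, naturality in $t$, and compatibility with composition and identities are all immediate from the definitions.
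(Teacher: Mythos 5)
Your proposal is correct and follows essentially the same route as the paper: observe that $\sigma\mapsto NC(\sigma)$ is an order preserving map of face posets, pass to geometric realizations via barycentric subdivision, and verify $N(C'C)=NC'\circ NC$ and $N\Delta_L=\mathrm{id}$ directly from the definitions. Your closing remark about why one cannot pick representative vertices to get a simplicial map on the un-subdivided nerves is a helpful explanation of a point the paper handles implicitly by going through subdivisions without comment.
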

\begin{proof}
  Let \(C \subseteq L \times L'\) and \(C' \subseteq L' \times L''\)
  be morphisms \(C \colon \Lambda \to \Lambda'\) and
  \(C' \colon \Lambda' \to \Lambda''\) of 
  dissimilarities. Given \(t \in \rstar\), the functions
  \begin{displaymath}
    NC \colon N\Lambda_t \to N\Lambda'_t 
    \quad \text{ and } \quad
    NC' \colon N\Lambda'_t \to N\Lambda''_t 
  \end{displaymath}
  are order preserving. Thus, they induce morphisms of geometric
  realizations of barycentric subdivisions. The identity morphism
  \(\Delta_L \colon \Lambda \to \Lambda\) induces the identity
  function \(N\Delta_L \colon N\Lambda \to N\Lambda\). In order to
  finish the proof we show that \(NC'(NC(\sigma)) = N(C'C)(\sigma)\)
  for every \(\sigma \in N\Lambda_t\). If \(l'' \in NC'(NC(\sigma))\),
  then there exists \(l' \in NC(\sigma)\) so that \((l',l'') \in
  C'\). Since \(l' \in NC(\sigma)\) there exists \(l \in \sigma\) so
  that \((l,l') \in C\). We conclude that \((l,l'') \in C'C\) and thus
  \(l'' \in N(C'C)(\sigma)\). Conversely, if
  \(l'' \in N(C'C)(\sigma)\), then there exists \(l \in \sigma\) so
  that \((l,l'') \in C'C\). By definition of \(C'C\) this means that
  there exists \(l' \in L'\) so that \((l,l') \in C\) and
  \((l',l'') \in C'\). We conclude that \(l' \in NC(\sigma)\) and that
  \(l'' \in NC'(NC(\sigma))\).
\end{proof}

\begin{corollary}
  Let \(k\) be a field. The persistent homology \(H_*(N\Lambda)\) of
  \(N \Lambda\) with coefficients in \(k\) is functorial in the sense
  that it is a functor from the category of dissimilarities to
  the category of persistence modules over \(k\).
\end{corollary}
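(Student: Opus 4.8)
The plan is to obtain this functor as a composite, letting the previous Proposition do all the real work. That Proposition already exhibits the Dowker nerve as a functor \(N\) from the category of dissimilarities to \(\Fun(\rstar,\topsp)\), the category of functors from \(\rstar\) to topological spaces. What remains is purely formal: singular homology with coefficients in \(k\) is a functor \(H_* \colon \topsp \to \kmod\) (in each fixed degree, or with values in graded \(k\)-modules — this makes no difference to the argument), and I would invoke the general fact that post-composition with a functor \(F \colon \CC \to \DD\) induces a functor \(\Fun(\rstar,\CC) \to \Fun(\rstar,\DD)\) sending \(X\) to \(F \circ X\) and a natural transformation \(\eta\) to the natural transformation with components \(F(\eta_t)\).

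Concretely, I would proceed in three short steps. First, recall from the Proposition that a morphism \(C \colon \Lambda \to \Lambda'\) gives, for each \(t\), an order preserving map \(NC \colon N\Lambda_t \to N\Lambda'_t\), hence a continuous map on geometric realizations of barycentric subdivisions, and that these are compatible with the filtration maps, i.e. assemble into a morphism of filtered spaces \(NC \colon N\Lambda \to N\Lambda'\). Second, apply \(H_*\) levelwise: since \(H_*\) is a functor on \(\topsp\), the maps \(H_*(N\Lambda_s \to N\Lambda_t)\) turn \(t \mapsto H_*(N\Lambda_t)\) into a functor \(\rstar \to \kmod\), that is, a persistence module, which is exactly \(H_*(N\Lambda)\) as named in the statement; and \(H_*(NC)\) has components \(H_*(NC_t)\) which commute with the structure maps by naturality, so it is a morphism of persistence modules. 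Third, check functoriality of the assignment \(\Lambda \mapsto H_*(N\Lambda)\), \(C \mapsto H_*(NC)\): this follows immediately from \(N(\Delta_L) = \id\) and \(N(C'C) = NC' \circ NC\) (the content of the Proposition) together with \(H_*(\id) = \id\) and \(H_*(g \circ f) = H_*(g) \circ H_*(f)\).

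I do not expect a genuine obstacle. The one place where care is needed — and where the Proposition is doing the essential work — is the passage from the combinatorial morphism \(C \subseteq L \times L'\) to an actual continuous map of filtered spaces; but the Proposition has already dealt with this (via order preservation of \(NC\) and the identity \(NC'(NC(\sigma)) = N(C'C)(\sigma)\)), so for this Corollary nothing new is required beyond composing with the homology functor. If one wanted to be maximally explicit, the only lemma worth isolating is the elementary one that post-composition with a functor is a functor between the corresponding functor categories.
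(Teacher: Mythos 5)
Your proof is correct and is exactly the argument the paper leaves implicit (the corollary is stated without proof, as a direct consequence of the preceding proposition): post-compose the nerve functor $N$ with the homology functor $H_*$, using the elementary fact that post-composition with a functor induces a functor between functor categories.
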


\subsection{Interleaving}
\label{sec:interleaving}

Here we present a notion of interleaving inspired by Bauer and
Lesnik \cite{MR3333456}.
\begin{definition}
  Let \(C\) and \(C'\) be filtered objects in a category \(\CC\) and
  let \(\alpha \colon \rnonneg \to \rnonneg\) be a translation
  function.
  \begin{enumerate}
  \item 
    A morphism \(G \colon C \to C'\) is an {\em
      \(\alpha\)-interleaving} if for every \(t \in \rstar\) there 
    exists a morphism \(F_t \colon C'(t) \to C({\alpha(t)})\) in
    \(\CC\) such
    that 
    \begin{displaymath}
      \alpha_{*C}(t) = F_t \circ G(t) \quad \text{and} \quad
      \alpha_{*C'}(t) = G(\alpha(t)) \circ F_t.
    \end{displaymath}
  \item 
    We say that \(C\) and \(C'\) are \(\alpha\)-interleaved if there
    exists an \(\alpha\)-interleaving \(G \colon C \to C'\).
  \end{enumerate}
\end{definition}

Suppose we are in the
situation that we have an inclusion \(K \subseteq L\) of filtered
simplicial complexes and that we are able to compute the filtration
value of simplices in \(L\), but we have no constructive way of
computing the filtration value of simplices in \(K\). In this
situation we can sometimes
construct at filtered simplicial sub-complex \(K'\) of \(L\) with
\(K'_{\infty} = 
K_\infty\). If the inclusion \(K \subseteq L\) is an
\(\alpha\)-interleaving, then the following lemma implies that also
the inclusion \(K' \subseteq L\) is an \(\alpha\)-interleaving. 
This happens for example when \(L\) is a \v{C}ech complex.
\begin{lemma}\label{sandwichinterleaving}
  Let \(C\), \(C'\) and \(C''\) be filtered objects in a category
  \(\CC\) and let \(\alpha \colon \rnonneg \to \rnonneg\) be a
  translation function. Let \(G \colon C \to C'\) and
  \(G' \colon C'\to C''\) be morphisms of filtered objects. Suppose
  that \(G'(t) \colon C'(t) \to C''(t)\) is a monomorphism for every
  \(t \in \rstar\) and that the composition \(G' G \colon C \to C''\)
  is an \(\alpha\)-interleaving. Then also \(G' \colon C' \to C''\) is
  an \(\alpha\)-interleaving.
\end{lemma}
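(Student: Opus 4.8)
The plan is to construct, for each \(t\in\rstar\), a morphism \(F_t\colon C''(t)\to C'(\alpha(t))\) in \(\CC\) realizing \(G'\) as an \(\alpha\)-interleaving, building it directly out of the interleaving data for \(G'G\).

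First I would unwind the hypothesis. Since \((G'G)(s)=G'(s)\circ G(s)\) for every \(s\in\rstar\), the assumption that \(G'G\) is an \(\alpha\)-interleaving provides, for each \(t\), a morphism \(H_t\colon C''(t)\to C(\alpha(t))\) with
\[
  \alpha_{*C}(t)=H_t\circ G'(t)\circ G(t)
  \qquad\text{and}\qquad
  \alpha_{*C''}(t)=G'(\alpha(t))\circ G(\alpha(t))\circ H_t.
\]
The candidate is then \(F_t:=G(\alpha(t))\circ H_t\). One of the two triangles required of \(F_t\) is immediate: \(G'(\alpha(t))\circ F_t=G'(\alpha(t))\circ G(\alpha(t))\circ H_t=\alpha_{*C''}(t)\) by the second equation above.

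The remaining triangle, \(F_t\circ G'(t)=\alpha_{*C'}(t)\), is the one place where the monomorphism hypothesis is needed, and it is the main obstacle. The naive route — precomposing with \(G(t)\) and using naturality of \(G\colon C\to C'\) — only yields \(F_t\circ G'(t)\circ G(t)=\alpha_{*C'}(t)\circ G(t)\), and \(G(t)\) cannot in general be cancelled. Instead I would postcompose with \(G'(\alpha(t))\): using the triangle already established, together with naturality of the natural transformation \(G'\) applied to the morphism \(t\to\alpha(t)\) of \(\rstar\),
\[
  G'(\alpha(t))\circ F_t\circ G'(t)=\alpha_{*C''}(t)\circ G'(t)=G'(\alpha(t))\circ\alpha_{*C'}(t).
\]
Since \(G'(\alpha(t))\) is a monomorphism, it can be cancelled on the left, giving \(F_t\circ G'(t)=\alpha_{*C'}(t)\). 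As the definition of \(\alpha\)-interleaving imposes no compatibility condition on the family \((F_t)_{t}\) across different \(t\), verifying the two triangles for each fixed \(t\) completes the proof.
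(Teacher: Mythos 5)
Your proof is correct and directly establishes the stated conclusion, namely that \(G'\colon C'\to C''\) is an \(\alpha\)-interleaving. The ingredients are the same as in the paper's printed proof: the data \(H_t\) (the paper calls it \(E_t\)) coming from the interleaving \(G'G\), naturality of \(G'\), and cancellation of the monomorphism \(G'(\alpha(t))\). The difference is which corner of the diagram the argument is assembled at. The paper defines \(F_t := E_t\circ G'(t)\colon C'(t)\to C(\alpha(t))\) and verifies the identities \(\alpha_{*C}(t)=F_t\circ G(t)\) and \(\alpha_{*C'}(t)=G(\alpha(t))\circ F_t\), which by the definition of \(\alpha\)-interleaving are the two triangles certifying that \(G\colon C\to C'\) (not \(G'\)) is an \(\alpha\)-interleaving. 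Your construction \(F_t := G(\alpha(t))\circ H_t\colon C''(t)\to C'(\alpha(t))\) is the morphism of the correct type for a witness of \(G'\) being an \(\alpha\)-interleaving, and the monomorphism-cancellation step you carry out, postcomposing \(F_t\circ G'(t)\) with \(G'(\alpha(t))\) and using the already-established triangle plus naturality, is precisely what closes the stated claim. Both conclusions are in fact true under the hypotheses and are related through \(G\) (one can pass from one witness to the other by composing with the appropriate component of \(G\)), but your route is the one that matches the lemma as stated and as used in the discussion preceding it, where one wants the inclusion \(K'\subseteq L\) itself to be the interleaving.
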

\begin{proof}
  Let \(t \in \rstar\), and pick
  \(E_t \colon C''(t) \to C(\alpha(t))\) such that
  \begin{displaymath}
    \alpha_{*C}(t) = E_t \circ (G' G)(t) \quad \text{and} \quad
    \alpha_{*C''}(t) = (G' G)(\alpha(t)) \circ E_t. 
  \end{displaymath}
  Defining \(F_t = E_t G'(t) \colon C'(t) \to C(\alpha(t))\) the above
  relations imply that
  \begin{displaymath}
    \alpha_{*C}(t) = E_t \circ (G' G)(t) = (E_t G'(t)) \circ G(t) = F_t
    \circ G(t)
  \end{displaymath}
  and
  \begin{align*}
    G'(\alpha(t)) \circ \alpha_{*C'}(t) &= \alpha_{*C''}(t) \circ
    G'(t) \\
    &= (G' G)(\alpha(t)) \circ E_t \circ G'(t) \\
    &= G'(\alpha(t)) \circ G(\alpha(t)) \circ F_t.
  \end{align*}
  Then also \(\alpha_{*C'}(t) = G(\alpha(t)) \circ F_t\), since
  \(G'(\alpha(t))\) is a monomorphism.
\end{proof}

The following results are analogues of \cite[Proposition 2.2.11 and
Proposition 2.2.13]{MR3413628}.
\begin{lemma}[Functoriality]
  \label{inducedinterleaving}
  Let \(C\) and \(C'\) be filtered objects in a category \(\CC\), let
  \(\alpha \colon \rnonneg \to \rnonneg\) be a translation function
  and let \(H \colon \CC \to \DD\) be a functor.  If \(C\) and \(C'\)
  are \(\alpha\)-interleaved, then the filtered objects \(H C\) and
  \(H C'\) in \(\DD\) are \(\alpha\)-interleaved.
\end{lemma}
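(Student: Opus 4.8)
The plan is to unwind the definition of an $\alpha$-interleaving and transport all the data through the functor $H$. Suppose $G \colon C \to C'$ is an $\alpha$-interleaving, so that for every $t \in \rstar$ there is a morphism $F_t \colon C'(t) \to C(\alpha(t))$ in $\CC$ with $\alpha_{*C}(t) = F_t \circ G(t)$ and $\alpha_{*C'}(t) = G(\alpha(t)) \circ F_t$. First I would apply $H$ to the morphism $G$ of filtered objects to obtain a morphism $HG \colon HC \to HC'$ of filtered objects in $\DD$; here I use that $H$ sends natural transformations to natural transformations, so $HG$ really is a morphism in the category of filtered objects, with $(HG)(t) = H(G(t))$.

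Next I would produce the required one-sided inverses in $\DD$ by setting $\tilde F_t = H(F_t) \colon (HC')(t) \to (HC)(\alpha(t))$, using $(HC')(t) = H(C'(t))$ and $(HC)(\alpha(t)) = H(C(\alpha(t)))$. The key computation is then purely formal: applying $H$ to the two interleaving identities and using functoriality ($H$ preserves composition),
\begin{align*}
  \alpha_{*HC}(t) &= H(\alpha_{*C}(t)) = H(F_t \circ G(t)) = H(F_t) \circ H(G(t)) = \tilde F_t \circ (HG)(t), \\
  \alpha_{*HC'}(t) &= H(\alpha_{*C'}(t)) = H(G(\alpha(t)) \circ F_t) = H(G(\alpha(t))) \circ H(F_t) = (HG)(\alpha(t)) \circ \tilde F_t.
\end{align*}
This shows $HG$ is an $\alpha$-interleaving, hence $HC$ and $HC'$ are $\alpha$-interleaved, which is the conclusion.

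The one point that needs care — and the closest thing to an obstacle — is checking that $H$ sends the $\alpha$-unit of $C$ to the $\alpha$-unit of $HC$, i.e. the identity $\alpha_{*HC}(t) = H(\alpha_{*C}(t))$ used above. This holds because $\alpha_{*C}(t) = C(t \to \alpha(t))$ by definition, so $H(\alpha_{*C}(t)) = H(C(t \to \alpha(t))) = (HC)(t \to \alpha(t)) = \alpha_{*HC}(t)$, where the middle equality is just the definition of the composite functor $HC = H \circ C$. Once this is noted, everything else is a one-line application of functoriality, so I expect the proof to be short.
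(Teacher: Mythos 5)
Your proof is correct. The paper itself states this lemma without proof (noting only that it is an analogue of a result in Bubenik--de Silva--Scott), so there is nothing to compare against; your argument --- apply $H$ to the interleaving data, define $\tilde F_t = H(F_t)$, and use functoriality together with the observation that $H$ carries the $\alpha$-unit of $C$ to the $\alpha$-unit of $HC$ --- is exactly the natural proof the authors omitted.
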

\begin{lemma}[Triangle inequality]
  Let \(G \colon C \to C'\) be an \(\alpha\)-interleaving and let
  \(G' \colon C' \to C''\) be an \(\alpha'\)-interleaving of filtered
  objects in a category \(\CC\). Then the composition
  \(G'' = G' G \colon C \to C''\) is an
  \(\alpha \alpha'\)-interleaving.
\end{lemma}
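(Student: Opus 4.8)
The plan is to exhibit an explicit witness for the $\alpha\alpha'$-interleaving by composing the witnesses for the two given interleavings, where I read $\alpha\alpha'$ as the composite translation function $\alpha\circ\alpha'$. First I would record that $\alpha\alpha'$ is indeed a translation function: it is order preserving as a composite of order preserving functions, and since both $\alpha$ and $\alpha'$ are functors under the identity we have $t\le\alpha'(t)\le\alpha(\alpha'(t))$ for all $t$. Fix $t\in\rstar$. By the hypotheses there are morphisms $F_s\colon C'(s)\to C(\alpha(s))$ and $F'_s\colon C''(s)\to C'(\alpha'(s))$ satisfying, for every $s$,
\[
\alpha_{*C}(s)=F_s\circ G(s),\quad \alpha_{*C'}(s)=G(\alpha(s))\circ F_s,\quad
\alpha'_{*C'}(s)=F'_s\circ G'(s),\quad \alpha'_{*C''}(s)=G'(\alpha'(s))\circ F'_s.
\]
I then set
\[
F''_t := F_{\alpha'(t)}\circ F'_t \colon C''(t)\to C'(\alpha'(t))\to C\bigl(\alpha(\alpha'(t))\bigr)=C\bigl((\alpha\alpha')(t)\bigr),
\]
and the claim is that $F''_t$ witnesses the $\alpha\alpha'$-interleaving property for $G''=G'G$ at $t$.

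Next I would verify the two required identities by a diagram chase using only naturality of $G$ and $G'$ and functoriality of $C,C',C''$ on the poset $\rstar$. For the first, $F''_t\circ G''(t)=F_{\alpha'(t)}\circ\bigl(F'_t\circ G'(t)\bigr)\circ G(t)=F_{\alpha'(t)}\circ\alpha'_{*C'}(t)\circ G(t)$; since $\alpha'_{*C'}(t)=C'(t\to\alpha'(t))$, naturality of $G$ rewrites $C'(t\to\alpha'(t))\circ G(t)=G(\alpha'(t))\circ C(t\to\alpha'(t))$, so the expression becomes $F_{\alpha'(t)}\circ G(\alpha'(t))\circ C(t\to\alpha'(t))=\alpha_{*C}(\alpha'(t))\circ C(t\to\alpha'(t))$, and composing the two structure maps of $C$ gives $C\bigl(t\to(\alpha\alpha')(t)\bigr)=(\alpha\alpha')_{*C}(t)$. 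For the second, $G''\bigl((\alpha\alpha')(t)\bigr)\circ F''_t=G'\bigl((\alpha\alpha')(t)\bigr)\circ\bigl(G((\alpha\alpha')(t))\circ F_{\alpha'(t)}\bigr)\circ F'_t=G'\bigl((\alpha\alpha')(t)\bigr)\circ\alpha_{*C'}(\alpha'(t))\circ F'_t$; using $\alpha_{*C'}(\alpha'(t))=C'(\alpha'(t)\to(\alpha\alpha')(t))$ and naturality of $G'$ this equals $C''(\alpha'(t)\to(\alpha\alpha')(t))\circ G'(\alpha'(t))\circ F'_t=C''(\alpha'(t)\to(\alpha\alpha')(t))\circ\alpha'_{*C''}(t)$, and composing structure maps of $C''$ yields $(\alpha\alpha')_{*C''}(t)$.

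There is essentially no obstacle here; this is the filtered-object analogue of the familiar ``metric'' triangle inequality for interleaving distances, and the only points demanding care are the index bookkeeping — namely that the witness $F''_t$ at parameter $t$ uses $F'$ at level $t$ but $F$ at level $\alpha'(t)$ — and keeping straight the repeated appeals to naturality of $G$ and $G'$ against functoriality of the three filtered objects. One could also package the argument more conceptually by noting $\alpha^*$ and $(\alpha')^*$ are functors on filtered objects and $(\alpha\alpha')^*=(\alpha')^*\alpha^*$, so that $F''$ is obtained by applying $(\alpha')^*$ to the morphism $F$ and composing with $F'$; but the direct chase above is shortest.
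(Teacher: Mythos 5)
Your proof is correct and is essentially the paper's argument: both choose the witness $F''_t = F_{\alpha'(t)}\circ F'_t$ and verify the two identities by chaining the interleaving equations with naturality of $G$ (resp.\ $G'$) and functoriality of the filtered objects. The paper renders the two verifications as commutative diagrams while you write them as equational chases, but the decomposition, the choice of $F''_t$, and every intermediate step match.
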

\begin{proof}
  Let \(\alpha'' = \alpha \alpha'\) and let \(t \in \rstar\).  By
  definition there exist morphisms
  \(F_{\alpha' (t)} \colon C'(\alpha' (t)) \to C(\alpha(\alpha
  '(t)))\) and \(F'_t \colon C''(t) \to C'(\alpha'(t))\) so that
  \begin{displaymath}
    \alpha_{*C}(\alpha' (t)) = F_{\alpha' (t)} \circ G(\alpha ' (t)) \quad \text{and} \quad
    \alpha_{*C'}(\alpha' (t)) = G(\alpha(\alpha ' (t))) \circ F_{\alpha' (t)}.
  \end{displaymath}
  and
  \begin{displaymath}
    \alpha'_{*C'}(t) = F'_t \circ G'(t) \quad \text{and} \quad
    \alpha'_{*C''}(t) = G'(\alpha'(t)) \circ F'_t.
  \end{displaymath}
  Let \(F''_t = F_{\alpha' (t)} F'_t \colon C''_t \to C_{\alpha \alpha'
    (t)}\).

  The above relations imply that the right hand triangles in the diagram
  \begin{displaymath}
  \begin{tikzcd}
    C(t) \arrow[rr, "G(t)"] \arrow[drr, "\alpha'_{*C}(t)" description]
    \arrow[ddrrrr, bend right, "\alpha''_{*C}(t)"']
    && C'(t) \arrow[rr, "G'(t)"]  \arrow[drr,
    "\alpha'_{*C'}(t)" description] 
    && C''(t) \arrow[d, "F'_t"] \\
    && C(\alpha'(t)) \arrow[rr, "G(\alpha'(t))"] 
    \arrow[drr, "\alpha_{*C}(\alpha'(t))" description] 
    &&
    C'(\alpha'(t)) \arrow[d, "F_{\alpha'(t)}"]\\
    && && C({\alpha \alpha'(t)})
  \end{tikzcd}    
  \end{displaymath}
  commute. The quadrangle in the above diagram commutes since \(G\) is
  a natural transformation and commutativity of the left hand triangle
  follows directly from the definition of the definition of the
  \(\alpha''\)-unit. We conclude that
  \begin{displaymath}
    \alpha''_{*C}(t) = F''_t \circ G''(t).
  \end{displaymath}

  The above relations also imply that the upper triangles in the diagram
  \begin{displaymath}
  \begin{tikzcd}
    C''(t) 
    \arrow[rr, "F'_t"]
    \arrow[drr, "\alpha'_{*C''}(t)" description]
    \arrow[ddrrrr, bend right, "\alpha''_{*C''}(t)"']
    &&
    C'(\alpha' (t)) 
    \arrow[rr, "F_{\alpha' (t)}"]
    \arrow[drr, "\alpha_{*C'}(\alpha' (t))" description]
    \arrow[d, "G'(\alpha' (t))"]
    &&
    C(\alpha \alpha' (t)) 
    \arrow[d, "G(\alpha \alpha' (t))"]
    \\
    &&
    C''(\alpha' (t)) 
    \arrow[drr, "\alpha'_{*C''}(\alpha' (t))" description]
    &&
    C'(\alpha \alpha' (t)) 
    \arrow[d, "G'(\alpha \alpha' (t))"]
    \\
    && 
    &&
    C''(\alpha \alpha' (t))
  \end{tikzcd}    
  \end{displaymath}
  commute. The quadrangle in the above diagram commutes since \(G'\)
  is a natural transformation and commutativity of the left hand
  triangle follows directly from the definition of the definition of
  the \(\alpha''\)-unit. We conclude that
  \begin{displaymath}
    \alpha''_{*C''}(t) = G''(\alpha''(t)) \circ F''_t.
  \end{displaymath}
\end{proof}

We find that the following lemma justifies our definition of
\(\alpha\)-interleaving.
\begin{lemma}
  Let \(\alpha \colon \rnonneg \to \rnonneg\) be a translation
  function and let \(V\) and \(V'\) be persistence modules.  A
  morphism \(G \colon V \to V'\) of persistence modules is an
  \(\alpha\)-interleaving if and only if both \(\ker G\) and
  \(\coker G\) are \(\alpha\)-trivial.
\end{lemma}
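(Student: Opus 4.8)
The plan is to translate the two $\alpha$-triviality hypotheses into elementary statements about the linear maps $G(t)$ and the structure maps of $V$ and $V'$, then to read off the forward implication at once and to build the maps $F_t$ for the converse by hand, exploiting that persistence modules are modules over a field.

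First I would spell out what the hypotheses mean. Since $G$ is a natural transformation, $\ker G$ is a sub-persistence module of $V$ and $\coker G$ a quotient persistence module of $V'$, their structure maps being induced by those of $V$ and $V'$; in particular their $\alpha$-units are respectively the restriction of $\alpha_{*V}$ and the map on quotients induced by $\alpha_{*V'}$. Hence ``$\ker G$ is $\alpha$-trivial'' says exactly that $\ker(G(t)) \subseteq \ker(\alpha_{*V}(t))$ for every $t \in \rstar$, and ``$\coker G$ is $\alpha$-trivial'' says exactly that $\im(\alpha_{*V'}(t)) \subseteq \im(G(\alpha(t)))$ for every $t$. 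I will use the naturality identity $G(\alpha(t)) \circ \alpha_{*V}(t) = \alpha_{*V'}(t) \circ G(t)$ throughout. The forward implication is then immediate: given an interleaving with maps $F_t$, the relation $\alpha_{*V}(t) = F_t \circ G(t)$ forces $\alpha_{*V}(t)$ to vanish on $\ker(G(t))$, and the relation $\alpha_{*V'}(t) = G(\alpha(t)) \circ F_t$ forces $\im(\alpha_{*V'}(t)) \subseteq \im(G(\alpha(t)))$.

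For the converse I would construct $F_t$ separately for each $t$; this is legitimate because the definition of $\alpha$-interleaving only asks for $F_t$ \emph{pointwise}, with no naturality in $t$. Fix $t$ and abbreviate $g = G(t)$, $g' = G(\alpha(t))$, $p = \alpha_{*V}(t)$, $q = \alpha_{*V'}(t)$. From $\ker g \subseteq \ker p$ one obtains a factorization $p = \bar p \circ g$ for a unique linear map $\bar p \colon \im g \to V(\alpha(t))$. Choosing a linear complement $C$ with $V'(t) = \im g \oplus C$, and using $\im q \subseteq \im g'$ together with the fact that $C$ is free over $k$ so that the surjection $V(\alpha(t)) \twoheadrightarrow \im g'$ splits over $C$, one lifts $q|_C$ to a linear map $\tilde q \colon C \to V(\alpha(t))$ with $g' \circ \tilde q = q|_C$. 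Setting $F_t(w + c) = \bar p(w) + \tilde q(c)$ for $w \in \im g$ and $c \in C$, one checks $F_t \circ g = p$ directly, and $g' \circ F_t = q$ by combining $g' \circ \bar p = q|_{\im g}$ (which follows from the naturality identity, since $g'(\bar p(g(a))) = g'(p(a)) = q(g(a))$) with $g' \circ \tilde q = q|_C$. This yields the required $F_t$, hence $G$ is an $\alpha$-interleaving.

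I expect the only delicate point to be the bookkeeping in the converse: ensuring that the two triangles $F_t \circ G(t) = \alpha_{*V}(t)$ and $G(\alpha(t)) \circ F_t = \alpha_{*V'}(t)$ hold simultaneously rather than one at the expense of the other. The device that handles this is to pin $F_t$ down on $\im g$ by $\bar p$ — which settles the first triangle and, by naturality, is automatically compatible with the second on that summand — and only then to extend $F_t$ over the complement $C$ so as to fix the second triangle. The fact that vector spaces split at will over a field is precisely what makes this step go through; over a general ring this construction, and hence the lemma, would fail.
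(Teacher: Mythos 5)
Your proof is correct and takes essentially the same approach as the paper: the forward direction is read off from the interleaving triangles, and the converse is proved by defining $F_t$ separately on $\im G(t)$ (via the factorization forced by $\ker G(t)\subseteq\ker\alpha_{*V}(t)$) and on a complement (via a lift through $G(\alpha(t))$ afforded by $\im\alpha_{*V'}(t)\subseteq\im G(\alpha(t))$). The only difference is cosmetic: you phrase it with splittings and complements where the paper chooses explicit bases, but the construction is identical.
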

\begin{proof}
  Suppose first that \(G\) is an \(\alpha\)-interleaving.  Fix \(t\)
  and pick \(F_t \colon V'(t) \to V(\alpha(t))\) so that
  \(\alpha_{V*}(t) = F_t \circ G(t)\) and
  \(\alpha_{V'*}(t) = G(\alpha(t)) \circ F_t\).  Then
  \(v \in \ker G(t)\) implies
  \begin{displaymath}
    \alpha_{\ker G*} v = \alpha_{V*} v =F_t (G(t)(v)) = 0.
  \end{displaymath}
  Similarly, if
  \( v' + \im G(t) \in \coker G(t) \), then 
  \[\alpha_{\coker G*} (v'
  + \im G(t)) = \alpha_{V'*} (v') + \im G(\alpha(t)) = 0\]
  since
  \(\alpha_{V'*}(v') =  G(\alpha(t))(F_tv') \in \im G(\alpha(t))\).
  Thus \(\ker G\) and \(\coker G\) are
  \(\alpha\)-trivial.
  
  Conversely, suppose that \(\ker G\) and \(\coker G\) are
  \(\alpha\)-trivial and fix \(t\). Choose a basis \(e_1, \dots, e_a\)
  for \(\ker G(t)\) and choose \(f_1,\dots, f_b\) so that
  \begin{displaymath}
    e_1, \dots, e_a, f_1, \dots, f_b
  \end{displaymath}
  is a basis for \(C(t)\). Note that \(G(t)(f_1), \dots, G(t)(f_b)\)
  are linearly independent in \(C'(t)\) and choose \(g_1,\dots, g_c\)
  so that
  \begin{displaymath}
    G(t)(f_1), \dots, G(t)(f_b), g_1, \dots, g_c
  \end{displaymath}
  is a basis for \(C'(t)\). We use this basis to define \(F_t \colon
  C'(t) \to C(\alpha(t))\) as follows:
  On basis elements of the form \(G(t)(f_i)\) we define
  \begin{displaymath}
    F_t(G(t)(f_i)) = \alpha_{C*}(t)(f_i).
  \end{displaymath}
  Now consider basis elements of the form \(g_i\).
  Since \(\alpha_{\coker G*} = 0\) we know that \(\alpha_{C'*}(t)(g_i)
  \in \im G(\alpha(t))\). We choose \(c_i \in C(\alpha(t))\) so that
  \begin{displaymath}
    G(\alpha(t))(c_i) = \alpha_{C'*}(t)(g_i)
  \end{displaymath}
  and define
  \begin{displaymath}
    F_t(g_i) = c_i.
  \end{displaymath}
  Since \(\alpha_{\ker G} = 0\) we have
  \begin{displaymath}
    \alpha_{C*}(t)(e_i) = 0 = F_t(G(e_i)),
  \end{displaymath}
  so \(F_t G(t) = \alpha_{C*}(t)\). On the other hand, the equation
  \begin{displaymath}
    G(\alpha t) (F_t(G(t)(e_i))) = G(\alpha t) (\alpha_{C*}(e_i)) = \alpha_{C'*}(G(t)(e_i))
  \end{displaymath}
  shows that \(\alpha_{C'*}(t) = G(\alpha t) F_t\).
\end{proof}

\section{Matchings}
\label{sec:matching}

This presentation of matchings follows \cite{2018arXiv180106725H}.
\begin{definition}
  The set of {\em persistence intervals} is the set \(\upper\) of
  intervals in \(\rstar\).
\end{definition}
We write \(\overline{a}\) for the closure of an interval
\(a \in \upper\). Note that \(\overline a\) is determined by the end
points of the interval \(a\).
\begin{definition}
  A {\em persistence diagram} consists of a set \(X\) and a function
  \(p \colon X \to \upper\) from \(X\) to the set of persistence
  intervals. We refer to the elements of \(X\) as {\em persistence
    classes}. 
\end{definition}
\begin{definition}
  A {\em matching} \(R\) of two persistence diagrams
  \(p \colon X \to \upper\) and \(p' \colon X' \to \upper\) consists
  of a relation \(R \subseteq X \times X'\) with the property that the
  compositions
  \begin{displaymath}
    \pi_1 \colon R  \to X, \quad \pi_1(x,x') = x 
  \end{displaymath}
  and
  \begin{displaymath}
    \pi_2 \colon R  \to X', \quad \pi_2(x,x') = x' 
  \end{displaymath}
  with the inclusion of \(R\) in \(X \times X'\) and the projections
  to \(X\) and \(X'\) respectively are injective with \(p \circ \pi_1
  = p' \circ \pi_2\). We say that a
  persistence class \(x \in X\) is {matched by \(R\)} if there exists
  a persistence class \(x' \in X'\) so that \((x,x') \in
  R\). Similarly we say that a persistence class \(x' \in X'\) is
  {matched by \(R\)} if there exists a persistence class \(x \in X\)
  so that \((x,x') \in R\).
\end{definition}
\begin{definition}
  Let \(J\) be a subset of \(\rstar\). The
  persistence module \(k(J)\) has values
  \begin{displaymath}
    k(J)(t) =
    \begin{cases}
      k & \text{if \(t \in J\)} \\
      0 & \text{otherwise}
    \end{cases}
  \end{displaymath}
  and structure maps equal to identity maps whenever possible.
\end{definition}
\begin{definition}
  Let \(\alpha \colon \rnonneg \to \rnonneg\) be a translation
  function and let \(p \colon X \to \upper\) be a persistence diagram.
  We say that a persistence class \(x \in X\) is {\em
    \(\alpha\)-trivial} if the persistence module \(k(p(x))\) is
  \(\alpha\)-trivial. Otherwise we say that \(x\) is {\em
    \(\alpha\)-nontrivial}.
\end{definition}
Note that if \(p(x)\) has end points \(b < d\), then \(p(x)\) is
\(\alpha\)-trivial if and only if \(d \le \alpha(b)\).

\begin{definition}
  Let \(V\) be a persistence module over a field \(k\). We say that
  \(p \colon X \to \upper\) is {\em a  
  persistence 
  diagram of \(V\)} if there exists an isomorphism of the form
  \begin{displaymath}
    V \cong \bigoplus_{x \in X} k(p(x)).
  \end{displaymath}
\end{definition}
\begin{definition}
  The category of pointwise finite dimensional persistence modules
  over the field \(k\) is the full subcategory of the category of
  persistence modules \(V\) over \(k\) with \(V_t\) finite dimensional
  for every \(t \in \rstar\).
\end{definition}

We restate the decomposition theorem for pointwise finite-dimensional persistence modules \cite[Theorem 1.1]{MR3323327} in our notation. 
\begin{theorem}
  Let \(k\) be a field. Every pointwise finite dimensional persistence
  module over \(k\) has a persistence diagram.
\end{theorem}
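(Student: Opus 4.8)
This is Crawley-Boevey's decomposition theorem \cite{MR3323327} restated in the notation of this paper. A persistence module over \(k\) in our sense is a functor \(V \colon \rstar \to \kmod\), ``pointwise finite dimensional'' means each \(V(t)\) is finite dimensional, and \(\rstar\) is a totally ordered set, so the statement is precisely \cite[Theorem 1.1]{MR3323327} applied to the index poset \(\rstar\). The plan is therefore to recall how that proof manufactures the diagram and to note that no further work is needed beyond this translation.

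The proof produces an explicit isomorphism \(V \cong \bigoplus_{J} k(J)^{\oplus m_J}\), where \(J\) ranges over intervals of \(\rstar\) with multiplicities \(m_J\); a persistence diagram is then read off by taking \(X\) to be a set indexing the summands of this decomposition and \(p \colon X \to \upper\) the function carrying a summand \(k(J)\) to \(J\). The decomposition itself is obtained by Crawley-Boevey's \emph{functorial filtration} method. For each cut \(c\) of \(\rstar\) one considers, inside every \(V(t)\) with \(c \le t\), the subspace \(\bigcap_{c < s \le t} \im(V(s \to t))\) of elements that have ``been alive since before \(c\)'', and dually, for \(c \ge t\), the subspace \(\bigcap_{c < u}\ker(V(t \to u))\) of elements already ``dead by \(c\)'' (with the usual conventions at the endpoints \(0\) and \(\infty\)). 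For an interval \(J\) one intersects the alive-since subspace associated with the left endpoint of \(J\) with the dead-by subspace associated with the right endpoint of \(J\), passes to a complement of the analogous spaces for the other intervals ordered suitably, and chooses a basis. Carrying out these choices compatibly in \(t\) assembles each \(V(t)\) into a direct sum of lines grouped into interval modules, which is the asserted isomorphism.

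The same analysis identifies the indecomposable pointwise finite dimensional persistence modules over \(\rstar\) as exactly the interval modules \(k(J)\): each \(k(J)\) is indecomposable since \(\mathrm{End}(k(J)) = k\), and more generally the endomorphism ring of a pointwise finite dimensional indecomposable is local, because Fitting's lemma applied pointwise decomposes such a module into its eventual kernel and eventual image, one of which must vanish by indecomposability, making \(\varphi\) or \(1 - \varphi\) an automorphism for every endomorphism \(\varphi\). Locality of endomorphism rings then makes the decomposition unique up to reindexing by the Krull--Remak--Schmidt--Azumaya theorem, so the persistence diagram is essentially canonical, although the present statement asks only for its existence.

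The main obstacle is the \emph{coherence} of the construction in the second paragraph: the interval pieces must be selected so that they vary functorially with \(t\), i.e. so that they genuinely assemble into sub-persistence-modules isomorphic to \(k(J)^{\oplus m_J}\), rather than furnishing merely a separate decomposition of each \(V(t)\). This is exactly where pointwise finite dimensionality is indispensable: it makes the image and kernel filtrations above stabilize, hence finite, at each \(t\), so the required complements exist and can be transported compatibly along all structure maps, and it forces only finitely many intervals \(J\) to be supported at any given \(t\), keeping the direct sum locally finite. By contrast, a naive strategy of peeling off one indecomposable summand at a time cannot work, since the family of intervals \(J\) occurring in \(V\) can be infinite; the functorial filtration constructs all of the summands simultaneously, which is what circumvents this.
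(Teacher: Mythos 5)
Your proposal is correct and matches the paper's treatment: the paper offers no independent proof but simply restates \cite[Theorem 1.1]{MR3323327} (Crawley-Boevey's decomposition theorem) in its own notation, exactly as you do in your first paragraph. The additional sketch of the functorial-filtration argument and the Krull--Remak--Schmidt--Azumaya uniqueness discussion is accurate but goes beyond what the paper records, which is just the citation.
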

We now state the generalized induced matching theorem \cite[Theorem
6.1]{MR3333456} and \cite[Theorem 3.2]{2018arXiv180106725H}. In order
to do this we use the generalized inverse function of a translation
function from Definition~\ref{generalizedinverse}.
\begin{theorem}
  \label{matchingthm}
  There exists a function \(\chi \colon \Pers \to \match\) from the
  set of morphisms of pointwise finite dimensional persistence modules
  over the field \(k\) to the set of matchings with the following
  properties: Let \(f \colon V \to V'\) be a morphism of pointwise
  finite persistence
  modules and let \(\chi(f)\) be of the form
  \begin{displaymath}
    \chi(f) \colon (X \xto p E) \to (X' \xto {p'} E),
  \end{displaymath}
  that is, 
  \begin{displaymath}
    \chi(f) \subseteq X \times X'. 
  \end{displaymath}
  Assume that \(f\) is an \(\alpha\)-interleaving and that
  \((x,x') \in \chi(f)\) with \(\overline{p(x)} = [b,d]\) and
  \(\overline{p'(x')} = [b',d']\). Then the following holds:
  \begin{enumerate}
  \item \(b' \le b < d' \le d\) and
  \item \(b \le \alpha(b')\) and
  \item \(\alpha^{\leftarrow}(d) \le d'\).
  \end{enumerate}
  Moreover all \(\alpha\)-nontrivial persistence classes of \(X\) and
  \(X'\) are matched by \(\chi(f)\).
\end{theorem}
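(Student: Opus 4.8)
The plan is to follow the induced-matching argument of Bauer and Lesnik \cite{MR3333456}, organised around the image factorisation of \(f\); the only feature not already present in \cite{MR3333456,2018arXiv180106725H} is that the interleaving here is measured by a translation function rather than a shift, and this will be absorbed throughout by the generalized inverse of Definition~\ref{generalizedinverse}. Write \(\mathcal B(V)\) for the multiset of intervals of a persistence diagram of a pointwise finite dimensional module \(V\), well defined up to relabelling by the decomposition theorem, so that all the relevant information sits in the rank invariants \(\operatorname{rk}\bigl(V(s)\to V(t)\bigr)\). The first step is the combinatorial core: two structural lemmas. For a monomorphism \(g\colon A\hookrightarrow B\) there should be a canonical injective matching of persistence diagrams in which every interval of \(A\) is matched, a matched pair has the form \([b,d)\leftrightarrow[b',d)\) with \(b'\le b\) (the right endpoint is preserved, the birth can only move earlier), and the multiset of unmatched intervals of \(B\) together with the intervals \([b',b)\) arising from matched pairs with \(b'<b\) is exactly \(\mathcal B(\coker g)\). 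Dually, for an epimorphism \(h\colon A\twoheadrightarrow B\) there should be a canonical injective matching in which every interval of \(B\) is matched, a matched pair has the form \([b,d)\leftrightarrow[b,d')\) with \(d'\ge d\) (the birth is preserved, the death can only move later), and the unmatched intervals of \(A\) together with the intervals \([d,d')\) arising from matched pairs with \(d<d'\) form exactly \(\mathcal B(\ker h)\). These are the substance of \cite{MR3333456,2018arXiv180106725H}, proved by a direct analysis of the rank functions of the modules.

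Next I would define \(\chi\). Given \(f\colon V\to V'\), factor it as \(V\xrightarrow{\pi}\im f\xrightarrow{\iota}V'\) with \(\pi\) epic and \(\iota\) monic, so that \(\ker\pi=\ker f\) and \(\coker\iota=\coker f\). Applying the epimorphism lemma to \(\pi\) yields an injective matching \(\mathcal B(\im f)\hookrightarrow\mathcal B(V)\), and applying the monomorphism lemma to \(\iota\) yields an injective matching \(\mathcal B(\im f)\hookrightarrow\mathcal B(V')\); composing these two injections out of \(\mathcal B(\im f)\) produces an injective matching \(\chi(f)\subseteq\mathcal B(V)\times\mathcal B(V')\), and since the decomposition theorem makes all of this canonical given the diagrams, \(f\mapsto\chi(f)\) is a well-defined function \(\Pers\to\match\). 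A matched pair of \(\chi(f)\) comes from an interval \([b,\delta)\) of \(\im f\): the epimorphism lemma carries it to an interval \([b,d)\) of \(V\) with \(d\ge\delta\), the monomorphism lemma carries it to an interval \([b',\delta)\) of \(V'\) with \(b'\le b\), so the matched pair is \([b,d)\leftrightarrow[b',d')\) with \(d'=\delta\); since \(b<\delta\), this is precisely \(b'\le b<d'\le d\), which is property~(1).

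Then I would bring in the interleaving. Assuming \(f\) is an \(\alpha\)-interleaving, the lemma immediately preceding the theorem gives that \(\ker f\) and \(\coker f\) are \(\alpha\)-trivial, so every interval \([\beta,\delta)\) of \(\mathcal B(\ker f)\) or of \(\mathcal B(\coker f)\) satisfies \(\delta\le\alpha(\beta)\). The intervals of \(V\) that \(\chi(f)\) leaves unmatched are precisely those left unmatched by the epimorphism matching, hence form a sub-multiset of \(\mathcal B(\ker f)\) and are \(\alpha\)-trivial; likewise the unmatched intervals of \(V'\) form a sub-multiset of \(\mathcal B(\coker f)\) and are \(\alpha\)-trivial. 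Therefore every \(\alpha\)-nontrivial persistence class of \(X\) and of \(X'\) is matched. For a matched pair \([b,d)\leftrightarrow[b',d')\): if \(b'<b\) then \([b',b)\in\mathcal B(\coker f)\), whence \(b\le\alpha(b')\), which is~(2); if \(d'<d\) then \([d',d)\in\mathcal B(\ker f)\), whence \(d\le\alpha(d')\), which by the defining property \(\alpha^{\leftarrow}(t)\le s\iff t\le\alpha(s)\) of the generalized inverse is exactly~(3); and in the boundary cases \(b'=b\) or \(d'=d\) the relevant inequality follows from \(t\le\alpha(t)\) and \(\alpha^{\leftarrow}(t)\le t\).

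The hard part will be the two structural lemmas: producing the canonical matchings for a monomorphism and an epimorphism, and checking that the unmatched intervals together with the endpoint-shift intervals recover \(\mathcal B(\coker g)\), respectively \(\mathcal B(\ker h)\), on the nose. Once they are in hand, everything above is a short deduction, combined with the characterisation of \(\alpha\)-interleavings by \(\alpha\)-triviality of kernel and cokernel. The translation-function generality adds only a bookkeeping burden, since one cannot undo an interleaving by subtracting a constant: the endpoint estimates all have to be phrased through \(\alpha\) and \(\alpha^{\leftarrow}\), and \(\alpha^{\leftarrow}\) need be neither strictly increasing nor continuous.
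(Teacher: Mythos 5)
The paper does not prove Theorem~\ref{matchingthm}; it is stated as a restatement of \cite[Theorem 6.1]{MR3333456} and \cite[Theorem 3.2]{2018arXiv180106725H}, with the additive shift of those references replaced by the translation function \(\alpha\) and its generalized inverse \(\alpha^{\leftarrow}\). Your sketch is a faithful reconstruction of the induced-matching argument underlying those citations: factor \(f\) through its image, invoke the two structural matching lemmas (monomorphisms preserve deaths and can only move births earlier; epimorphisms preserve births and can only move deaths later, with the leftover and endpoint-shift intervals reassembling \(\coker\) and \(\ker\) respectively), compose the two injections out of \(\mathcal B(\im f)\), and then apply the lemma immediately preceding this theorem, which characterises \(\alpha\)-interleavings by \(\alpha\)-triviality of kernel and cokernel. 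Your deductions of (1)--(3) and of the matching of all \(\alpha\)-nontrivial classes are correct, including the boundary cases \(b'=b\) and \(d'=d\), which you correctly handle via \(t\le\alpha(t)\) and \(\alpha^{\leftarrow}(t)\le t\), and the translation \(d\le\alpha(d')\iff\alpha^{\leftarrow}(d)\le d'\) from Definition~\ref{generalizedinverse}. The hard content you flag and leave to the references --- the two structural lemmas and the on-the-nose identification of the unmatched-plus-shift intervals with \(\mathcal B(\coker g)\) and \(\mathcal B(\ker h)\) --- is exactly what the paper also takes as given by citation, so your proposal and the paper agree on where the substance lies and defer to the same sources for it.
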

In the above situation, if \(\alpha\) is bijective, then (3) is
equivalent to
\begin{displaymath}
  d \le \alpha(d').
\end{displaymath}
If we further assume that \(x'\) is \(\alpha\)-nontrivial, then
\(\alpha(b') < d'\) and the point \((b,d)\) lies in the box with
corners \((b',d')\) and \((\alpha(b'), \alpha(d'))\).  Conversely, if
\(\alpha\) is bijective and \(x\) is \(\alpha\)-nontrivial, then
\(\alpha(b) < d\) and the point \((b',d')\) lies in the box with
corners \((\alpha^{\leftarrow} b, \alpha^{\leftarrow} d)\) and
\((b,d)\).
\section{Truncated Nerves}
\label{sec:truncated}
\begin{definition}
  Let \(\Lambda \colon L \times W \to \rstar\) be a 
  dissimilarity and let \(\alpha \colon \rnonneg \to \rnonneg\) be a
  translation function. We say that a function
  \(T \colon L \to \rstar\) is an {\em \(\alpha\)-truncation function
    for \(\Lambda\)} if for all \(t \in \rstar\) and all \(l \in L\)
  there exists \(l' \in L\) so that for all \(w \in W\) with
  \(\Lambda(l,w) < t\) we have that \(\Lambda(l',w) < \alpha(t)\) and
  \(\Lambda(l',w) < T(l')\).
\end{definition}
\begin{definition}
  \label{truncatedDowkerDissimilarity}
  Let \(\Lambda \colon L \times W \to \rstar\) be a
  dissimilarity, let \(\alpha \colon \rnonneg \to \rnonneg\) be a
  translation function and let \(T \colon L \to \rstar\) be an
  \(\alpha\)-truncation function for \(\Lambda\).  The {\em
    \(T\)-truncation of \(\Lambda\)} is the dissimilarity
  \(\Gamma \colon L \times W \to \rstar\) defined by
  \begin{displaymath}
    \Gamma(l,w) =
    \begin{cases}
      \Lambda(l,w) & \text{if \(\Lambda(l,w) < T(l)\)} \\
      \infty & \text{otherwise}.
    \end{cases}
  \end{displaymath}
\end{definition}
\begin{proposition}
  \label{interleavingoftruncation}
  Let \(\Lambda \colon L \times W \to \rstar\) be a
  dissimilarity, let \(\alpha \colon \rnonneg \to \rnonneg\) be a
  translation function and let \(T \colon L \to \rstar\) be an
  \(\alpha\)-truncation function for \(\Lambda\).  Let \(\Gamma\) be
  the \(T\)-truncation of \(\Lambda\). Then, in the homotopy category
  of topological spaces, the inclusion of the nerve
  \(N\Gamma\) of \(\Gamma\) in the nerve \(N\Lambda\) of \(\Lambda\)
  is an \(\alpha\)-interleaving.
\end{proposition}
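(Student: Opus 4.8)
The plan is to exhibit the inclusion \(G \colon N\Gamma \to N\Lambda\) as an \(\alpha\)-interleaving by writing down, for every \(t \in \rstar\), an explicit map \(F_t\) going the other way and then verifying the two interleaving identities in the homotopy category by a contiguity argument. First note that \(G\) is well defined: if \(\Gamma(l,w) < t\) then \(\Gamma(l,w) \neq \infty\), so \(\Gamma(l,w) = \Lambda(l,w) < t\); hence \(\Gamma_t \subseteq \Lambda_t\), \(N\Gamma_t \subseteq N\Lambda_t\) for all \(t\), and these inclusions are compatible with the structure maps. Unravelling the definition of an \(\alpha\)-truncation function, for every \(t \in \rstar\) and every \(l \in L\) we may choose \(f_t(l) \in L\) such that for all \(w \in W\), \(\Lambda(l,w) < t\) implies \(\Lambda(f_t(l),w) < \alpha(t)\) and \(\Lambda(f_t(l),w) < T(f_t(l))\), that is, \(\Gamma(f_t(l),w) < \alpha(t)\).

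Next I would check that \(l \mapsto f_t(l)\) extends to a simplicial map \(f_t \colon N\Lambda_t \to N\Gamma_{\alpha(t)}\). If \(\sigma \in N\Lambda_t\), there is \(w \in W\) with \(\Lambda(l,w) < t\) for all \(l \in \sigma\), so \(\Gamma(f_t(l),w) < \alpha(t)\) for all \(l \in \sigma\); hence \(f_t(\sigma) \in N\Gamma_{\alpha(t)}\), witnessed by the same \(w\). In particular singletons map to singletons, so \(f_t\) is a well-defined simplicial map, and I let \(F_t\) be the map it induces on the geometric realizations of barycentric subdivisions used to define the functor \(N\).

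The heart of the argument is the verification of the two identities, and for each I would show that the relevant pair of simplicial maps is contiguous and then invoke the standard fact that contiguous simplicial maps induce homotopic maps on geometric realizations. For \(\alpha_{*N\Lambda}(t) = G(\alpha(t)) \circ F_t\), the two maps in question are the structure map \(N\Lambda_t \hookrightarrow N\Lambda_{\alpha(t)}\) and \(N\Lambda_t \xto{f_t} N\Gamma_{\alpha(t)} \hookrightarrow N\Lambda_{\alpha(t)}\): given \(\sigma \in N\Lambda_t\) witnessed by \(w\), every vertex of \(\sigma \cup f_t(\sigma)\) has \(\Lambda\)-value at \(w\) strictly below \(\alpha(t)\) — for vertices of \(\sigma\) because \(\Lambda(\cdot,w) < t \le \alpha(t)\), and for vertices of \(f_t(\sigma)\) because \(\Lambda(\cdot,w) \le \Gamma(\cdot,w) < \alpha(t)\) — so \(\sigma \cup f_t(\sigma) \in N\Lambda_{\alpha(t)}\) and the maps are contiguous. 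For \(\alpha_{*N\Gamma}(t) = F_t \circ G(t)\), the two maps are the structure map \(N\Gamma_t \hookrightarrow N\Gamma_{\alpha(t)}\) and the restriction of \(f_t\) to \(N\Gamma_t\): given \(\sigma \in N\Gamma_t\) witnessed by \(w\) we have \(\Gamma(l,w) < t\) and hence also \(\Lambda(l,w) < t\) for \(l \in \sigma\), so \(\Gamma(f_t(l),w) < \alpha(t)\), whence \(\sigma \cup f_t(\sigma) \in N\Gamma_{\alpha(t)}\) (again witnessed by \(w\)), so these maps are contiguous as well. This produces the required \(F_t\) and completes the proof.

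I expect the only genuine obstacle to be presentational: reconciling the abstract interleaving identities, which are equalities of morphisms in the homotopy category, with the combinatorial contiguity relations that actually produce them; everything else reduces to unwinding the definitions of the \(T\)-truncation and of an \(\alpha\)-truncation function. The cases \(t = 0\) and \(t = \infty\) (using \(\alpha(\infty) = \infty\)) are covered by the same construction. It is worth noting that the \(F_t\) arise from simplicial maps that are merely contiguous — not equal — to the relevant structure maps, which is why the statement is phrased in the homotopy category of topological spaces rather than at the level of filtered simplicial complexes.
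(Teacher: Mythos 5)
Your proposal is correct and follows essentially the same route as the paper: you choose the same map \(f_t\) directly from the definition of an \(\alpha\)-truncation function, verify that it lands in \(N\Gamma_{\alpha(t)}\), and establish the two interleaving identities by the same contiguity arguments (showing \(\sigma \cup f_t(\sigma)\) lies in \(N\Lambda_{\alpha(t)}\) and in \(N\Gamma_{\alpha(t)}\) respectively). The only difference is expository.
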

\begin{proof}
  It suffices, for every \(t \in \rstar\), to find a map \(f_t \colon
  N\Lambda_t \to N\Gamma_{\alpha(t)}\) so that the following diagrams
  commute up to homotopy:
  \begin{displaymath}
  \begin{tikzcd}
    N\Gamma_t \arrow[r] \arrow[dr, swap, "N\Gamma_{t \le \alpha(t)}"] & 
    N\Lambda_t \arrow[d, "f_t"] \\
    &
    N\Gamma_{\alpha(t)}
  \end{tikzcd}    
  \end{displaymath}
  and
  \begin{displaymath}
  \begin{tikzcd}
    N\Lambda_t \arrow[r, "f_t"] \arrow[dr, swap, "N\Lambda_{t \le \alpha(t)}"] & 
    N\Gamma_{\alpha(t)} \arrow[d] \\
    &
    N\Lambda_{\alpha(t)}.
  \end{tikzcd}    
  \end{displaymath}
  Fix \(t\) and choose a function 
  \(f_t \colon L \to L\) so that for every \(l \in L\) with
  \(\Lambda(l,w) < t\) the inequalities \(\Lambda(f_t(l),w) < \alpha(t)\) and
  \(\Lambda(f_t(l),w) < T(f_t(l))\) hold. 

  Below we first show that \(f_t\) induces a simplicial map
  \begin{displaymath}
    f_t \colon N\Lambda_t \to N\Gamma_{\alpha(t)}.
  \end{displaymath}
  That is, we show that if \(\sigma \in N\Lambda_t\), then
  \(f_t(\sigma) \in N\Gamma_{\alpha(t)}\). Next we show that
  \(f_t(\sigma) \cup \sigma \in N\Lambda_{\alpha(t)}\) so that the
  lower of the above displayed diagrams commutes up to homotopy. We
  will finish by showing that if \(\sigma \in N\Gamma_t\), then
  \(f_t(\sigma) \cup \sigma \in N\Gamma_{\alpha(t)}\) so that also the
  upper of the above displayed diagrams commutes up to homotopy.

  Let \(\sigma \in N\Lambda_t\) and pick \(w \in W\) so that
  \(\Lambda(l,w) < t\) for every \(l \in \sigma\). Then, for every \(l
  \in \sigma\) we have \(
  \Lambda(f_t(l),w) < \alpha(t)\) and \(\Lambda(f_t(l),w) <
  T(f_t(l)))\) so in particular
  \(\Gamma(f_t(l),w) = \Lambda(f_t(l),w) < \alpha(t)\). This implies
  both that \(f_t(\sigma) \in N\Gamma_{\alpha(t)}\) and 
  that \(f_t(\sigma) \cup \sigma \in N\Lambda_{\alpha(t)}\). Finally,
  if \(\sigma \in N\Gamma_t\) and we pick 
  \(w \in W\) so that
  \(\Gamma(l,w) = \Lambda(l,w) < t\) for every \(l \in \sigma\), then
  the above argument also implies that \(f_t(\sigma) \cup \sigma \in N\Gamma_{\alpha(t)}\).
\end{proof}
\begin{example}
  Let \(\Lambda \colon L \times W \to \rstar\) be a
  dissimilarity and let \(\alpha \colon \rnonneg \to \rnonneg\) be a
  translation function. 
  Given \(l,l' \in L\), let
  \begin{displaymath}
    P(l',l) = \{ \Lambda(l',w) \, \mid \,
    w \in W \text{ with }
    \alpha(\Lambda(l,w)) \le \Lambda(l',w) \}.
  \end{displaymath}
  Given a base point \(l_0 \in L\) the
  {\em cover dissimilarity
    \(\Lambda^{\alpha} \colon L \times L \to \rstar\) for \(\Lambda\)
    with respect to \(\alpha\)} is given by
  \begin{displaymath}
    \Lambda^{\alpha}(l',l) =
    \begin{cases}
      0 & \text{if \(l = l'\)}\\
      \infty & \text{if \(l \ne l'\) and \(l' = l_0\)}\\
      \sup(P(l',l) \cup \{0\}) & \text{if \(l \ne l'\) and \(l' \ne l_0\)}.
    \end{cases}
  \end{displaymath}
  If \(L\) is finite and \(<\) is a total order on \(L\) with \(l_0\)
  as minimal 
  element, we define the
  {\em \(\alpha\)-insertion radius \(\lambda^\alpha(l)\) of \(l\in L\)} as
  \begin{displaymath}
    \lambda^\alpha(l) =
    \begin{cases}
      \infty & \text{if \(l = l_0\)}\\
      \sup_{k \ge l} \inf_{l' < l} \Lambda^{\alpha}(l', k) 
      & \text{if \(l \ne l_0\)}
    \end{cases}
  \end{displaymath}

  Given \(l \in L\) with \(l \ne l_0\) and \(t \in \rstar\) with
  \(\alpha(t) > 0\), pick 
  \(l' \in L\) minimal with \(\Lambda^{\alpha}(l',l) <
  \alpha(t)\). (Such an \(l'\) exists since
  \(\Lambda^{\alpha}(l,l) = 0\) and \(\Lambda^{\alpha}(l_0,l) =
  \infty\)). 
  Let \(w \in W\) with \(\Lambda(l,w) < t\). Then either
  \(\Lambda(l',w) > \Lambda^{\alpha}(l',l)\) or
  \(\Lambda(l',w) \le \Lambda^{\alpha}(l',l) < \alpha(t)\). If
  \(\Lambda(l',w) > \Lambda^{\alpha}(l',l)\), then
  \begin{displaymath}
    \Lambda(l', w) < \alpha(\Lambda(l,w)) \le \alpha(t). 
  \end{displaymath}
  Thus, \(\Lambda(l,w) < t\) implies \(\alpha(t) > \Lambda(l',
  w)\). Since \(\infty  = \lambda^\alpha(l_0) \ge \alpha(t)\) and 
  \begin{displaymath}
    \lambda^\alpha(l') = \sup_{k \ge l'} \inf_{l'' < l'}
    \Lambda^{\alpha}(l'', k) \ge 
    \inf_{l'' < l'} \Lambda^{\alpha}(l'', l) \ge \alpha(t) ,
  \end{displaymath}
  the function \(\lambda^\alpha \colon L \to \rstar\) is an
  \(\alpha\)-truncation function for \(\Lambda\).
\end{example}

\begin{definition}\label{fps}
  Given a dissimilarity of the form
  \(\Lambda \colon L \times L \to \rstar\) with \(L\) finite a {\em
    farthest point sample} for \(\Lambda\) is a total order \(<\) on
  \(L\) with minimal element \(l_0\) so that for \(l \ne l_0\) we have
  \begin{displaymath}
    \inf_{l' < l} \Lambda(l',l) = \sup_{l'' \ge l} \inf_{l' < l} \Lambda(l', l'').
  \end{displaymath}
  The {\em insertion radius} of \(l \in L\) with respect to the total
  order \(<\) is
  \begin{displaymath}
    \lambda(l) =
    \begin{cases}
      \infty & \text{if \(l=l_0\)} \\
      \inf_{l' < l} \Lambda(l',l) & \text{otherwise}. 
    \end{cases}
  \end{displaymath}
\end{definition}
For \(\Lambda\) as in Definition~\ref{fps} a farthest point sample
\(L = \{l_0 < \dots < l_n\}\) can be produced recursively starting
from an initial point \(l_0\). When \(l_0,\dots,l_k\) have been
produced, we choose \(l_{k+1}\) so that
\begin{displaymath}
  \inf_{l' \in \{l_0,\dots,
    l_k\}} \Lambda(l',l_{k+1}) = \sup_{l'' \notin \{l_0,\dots,
    l_k\}} \inf_{l' \in \{l_0,\dots,
    l_k\}} \Lambda(l', l''). 
\end{displaymath}
Note that 
\begin{displaymath}
  \lambda(l) =
  \begin{cases}
    \infty & \text{if \(l = l_0\)} \\
    \sup_{k \ge l} \inf_{l' < l} \Lambda(l', k) & \text{otherwise}.
  \end{cases}
\end{displaymath}

\begin{example}
  Let \(d \colon W \times W \to \rstar\) be a metric, let \(L\) be a
  finite subset of \(L\) and let
  \(\Lambda \colon L \times W \to \rstar\) be the restriction of \(d\)
  to the subset \(L \times W\) of \(W \times W\). Let
  \(\Lambda^L \colon L \times L \to \rstar\) be the restriction of
  \(\Lambda\) to the subset \(L \times L\) of \(L \times W\) and let
  \(L = \{l_0 < \dots < l_n\}\) be a farthest point sampling for
  \(\Lambda^L\). We write \(\lambda^L(l) = \lambda(l)\) for the
  corresponding insertion radius. Let \(c > 1\) and let
  \(\alpha \colon \rnonneg \to \rnonneg\) be the translation function
  \(\alpha(t) = ct\).

  If \(\alpha\Lambda(l,w) \le \Lambda(l',w)\), then the triangle
  inequality for \(d\) implies that
  \(\Lambda(l',w) \le \Lambda^L(l',l) + \Lambda(l,w)\) and therefore
  \(\Lambda(l,w) \le \Lambda^L(l',l)/(c-1)\). This, together with the
  triangle inequality for \(d\) implies that
  \begin{displaymath}
    \Lambda(l',w) \le \Lambda^L(l',l) + \Lambda(l,w) \le
    \Lambda^L(l',l) + \frac{\Lambda^L(l',l)}{(c-1)} =
    \frac{c\Lambda^L(l', 
    l)}{(c-1)}. 
  \end{displaymath}
  From this consideration we can conclude that
  \begin{displaymath}
    \Lambda^{\alpha}(l',l) \le \frac{c\Lambda^L(l',
    l)}{(c-1)} 
  \end{displaymath}
  and that
  \begin{displaymath}
    \lambda^{\alpha}(l) \le \frac{c\lambda^L(l)}{(c-1)}.
  \end{displaymath}
  Since \(\lambda^{\alpha}\) is an \(\alpha\)-truncation function of
  \(\Lambda\), so is the function \(T(l) = {c\lambda^L(l)}/{(c-1)}\).
\end{example}

There exist many truncation functions for a given translation function
\(\alpha\). We have not succeeded in finding a class of truncation
functions for \(\alpha\) that are practical to implement and produces a
smallest possible simplicial 
complex under this constraint. We leave this as a problem for further
investigation. If the goal is merely to construct a dissimilarity
whose Dowker nerve is small the amount of possibilities is even
bigger. 

\section{Sparse Filtered Nerves}
\label{sec:dnerves}
\begin{definition}
  Let \(\Lambda \colon L \times W \to \rstar\) be a 
  dissimilarity and let \(R \colon L \to \rstar\) and \(\varphi \colon
  L \to L\) be functions. We say that \(l \in L\) is a {\em slope point}
  if \(R(l) < R(l')\) for every \(l' \in \varphi^{-1}(l)\). The
  {\em \((R, \varphi)\)-nerve of \(\Lambda\)} is the filtered simplicial complex 
  \(N(\Lambda, R, \varphi)\) with  
  \(N(\Lambda, R, \varphi)(t)\) consisting of all \(\sigma \in N\Lambda_t\)
  such that there exists \(w \in W\) satisfying:
  \begin{enumerate}
  \item \(\Lambda(l,w) < t\) for all \(l \in \sigma\).
  \item \(\Lambda(l,w) \le R(l')\) for all \(l, l' \in \sigma\) and
  \item \(\Lambda(l,w) < R(l)\) for all slope points \(l\) in \(\sigma\). 
  \end{enumerate}
\end{definition}
\begin{definition}
  A function \(\varphi \colon L \to L\) is a {\em parent function} if
  \(\varphi^n(l) = l\) for \(n > 0\) implies \(\varphi(l) = l\).  
\end{definition}
Note that \(\varphi \colon L \to L\) is a parent function if and only
if the directed graph with \(L\) as set of
nodes and \(E(\varphi) = \{(\varphi(l), l) \, \mid \, l \in L, \varphi(l) \neq l\}\) as set of edges is acyclic.
\begin{definition}
\label{sparsificationstructure}
  Let \(\Lambda \colon L \times W \to
  \rstar\) be a
  dissimilarity and let \(\varphi \colon L \to L\) be a parent function.
  We say that a function \(R \colon 
  L \to \rstar\) is a {\em restriction function for \(\Lambda\)
    relative to \(\varphi\)} if the following holds: 
  \begin{enumerate}
  \item For all \((l,w)\in L \times W\) with \(\Lambda(l,w) <
    \Lambda(\varphi(l),w)\)
    we have
    \(\Lambda(\varphi(l),w) \le R(l)\).
  \item For every \(l\in L\) we have
    \(R(\varphi(l)) \ge R(l)\).
  \item If \(\varphi(l) = l\), then \(R(l) = \infty\).
  \end{enumerate}
\end{definition}
\begin{definition}
  \label{canonicalrestriction}
  Let \(\Lambda \colon L \times W \to \rstar\) be a
  dissimilarity and let \(\varphi \colon L \to L\) be a parent function.
  Given \(l,l'\in L\) let
  \begin{displaymath}
    P(l,l') = \{ \Lambda(l',w) \, \mid \,
      w \in W \text{ with } \Lambda(l,w) < \Lambda(l',w) \}
  \end{displaymath}
  and define
  \begin{displaymath}
    \rho(l,l') =
    \begin{cases}
      \sup P(l,l') & \text{if \(P(l,l')\) is non-empty} \\
      0 & \text{if \(P(l,l') = \emptyset\).} \\
    \end{cases}
  \end{displaymath}
  The {\em \((\Lambda, \varphi)\)-restriction} \(R({\Lambda}, \varphi)
  \colon L \to \rstar\) is defined in several steps. First define \(R'
  \colon L \to \rstar\) by
  \begin{displaymath}
    R'(l) =
    \begin{cases}
      \rho(l, \varphi(l)) & \text{ if \(\varphi(l) \ne l\)} \\
      \infty & \text{ if \(\varphi(l) = l\).}
    \end{cases}
  \end{displaymath}
  Given \(l \in L\), let \(D(l)\) be the set of descendants of \(l\),
  that is, \(l' \in D(l)\) if and only if there exists \(m \ge 0\) so
  that \(l = \varphi^m(l')\). Next, we define \(R(\Lambda, \varphi)
  \colon L \to L\) by 
  \begin{displaymath}
    R(\Lambda, \varphi)(l) = \max_{l' \in D(l)} R'(l').
  \end{displaymath}
  Then, for every \(l\in L\) we have
  \(R(\Lambda, \varphi)(\varphi(l)) \ge R(\Lambda, \varphi)(l)\), and \(\varphi(l) = l\) implies
  \(R(\Lambda, \varphi)(l) = \infty\). Also, \(\Lambda(l,w) < \Lambda(\varphi(l), w)\)
  implies
  \begin{displaymath}
    \Lambda(\varphi(l),w) \le R'(l) \le R(\Lambda, \varphi)(l).
  \end{displaymath}
\end{definition}
\begin{proposition}
  \label{minimalityofcanonical}
  Let \(\Lambda \colon L \times W \to \rstar\) be a
  dissimilarity with \(L\) finite and let \(\varphi \colon L \to L\)
  be a parent function.
  Then the \((\Lambda,\varphi)\)-restriction \(R(\Lambda, \varphi)\) 
  is the minimal
  restriction function for \(\Lambda\)
  relative to \(\varphi\):
  If \(R\) is another restriction function for \(\Lambda\) relative to
  \(\varphi\),
  then \(R(\Lambda, \varphi)(l) \le R(l)\) for every \(l \in L\).
\end{proposition}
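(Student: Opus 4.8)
The plan is to verify that \(R(\Lambda,\varphi)\) satisfies the three axioms of a restriction function for \(\Lambda\) relative to \(\varphi\), and then to show it is dominated by any other such function. The first half is essentially recorded already in Definition~\ref{canonicalrestriction}: axiom (1) is the final displayed inequality there (using that \(l \in D(l)\), so \(R'(l) \le R(\Lambda,\varphi)(l)\)); axiom (3) follows because \(\varphi(l) = l\) forces \(R'(l) = \infty\) and hence the maximum over \(D(l)\) is \(\infty\); and axiom (2) holds since \(D(l) \subseteq D(\varphi(l))\), so the maximum defining \(R(\Lambda,\varphi)(\varphi(l))\) is over a larger set. I would state these three points explicitly but briefly, since the work is bookkeeping about the descendant sets \(D(l)\).

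For the minimality statement, let \(R\) be any restriction function for \(\Lambda\) relative to \(\varphi\). The key intermediate claim is that \(R(l') \le R(\varphi(l'))\) is not quite enough — I first want to show \(\rho(l',\varphi(l')) \le R(l')\), i.e. \(R'(l') \le R(l')\) for every \(l'\) with \(\varphi(l') \ne l'\). Fix such an \(l'\). By definition \(\rho(l',\varphi(l')) = \sup P(l',\varphi(l'))\), where \(P(l',\varphi(l'))\) consists of the values \(\Lambda(\varphi(l'),w)\) over those \(w\) with \(\Lambda(l',w) < \Lambda(\varphi(l'),w)\). For each such \(w\), axiom (1) for \(R\) gives \(\Lambda(\varphi(l'),w) \le R(l')\); taking the supremum over \(w\) yields \(\rho(l',\varphi(l')) \le R(l')\) (the case \(P(l',\varphi(l')) = \emptyset\) is trivial since then \(\rho = 0\)). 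When \(\varphi(l') = l'\) we have \(R'(l') = \infty = R(l')\) by axiom (3), so in all cases \(R'(l') \le R(l')\).

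Now fix \(l \in L\). By finiteness of \(L\), the maximum \(R(\Lambda,\varphi)(l) = \max_{l' \in D(l)} R'(l')\) is attained, say at some descendant \(l' \in D(l)\), so \(l = \varphi^m(l')\) for some \(m \ge 0\). Iterating axiom (2) for \(R\) along the chain \(l' , \varphi(l'), \dots, \varphi^m(l') = l\) gives \(R(l') \le R(\varphi^m(l')) = R(l)\). Combining with the previous paragraph, \(R(\Lambda,\varphi)(l) = R'(l') \le R(l') \le R(l)\), which is the desired inequality. I expect the only mild subtlety to be the order-of-quantifiers argument establishing \(\rho(l',\varphi(l')) \le R(l')\) — one must be careful that axiom (1) applies to \emph{every} relevant \(w\) before passing to the supremum — but this is routine; everything else is a straightforward induction along the parent graph, which is well-founded because \(\varphi\) is a parent function.
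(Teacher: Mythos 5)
Your proof is correct and takes essentially the same route as the paper's. The paper's argument is terser: it opens with "it suffices to show \(\rho(l,\varphi(l)) \le R(l)\) for all \(l\)" and leaves the reduction (the iteration of axiom (2) along the chain \(l', \varphi(l'), \dots, \varphi^m(l') = l\) and the passage through \(\max_{l' \in D(l)}\)) as well as the verification that \(R(\Lambda,\varphi)\) is itself a restriction function implicit, whereas you spell these out explicitly; you are also more careful than the paper's wording "if there exists a \(w\)," correctly noting that axiom (1) must be applied to every relevant \(w\) before taking the supremum.
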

\begin{proof}
  In the notation of Definition \ref{canonicalrestriction} it suffices
  to show that \(\rho(l, \varphi(l)) \le R(l)\) for all \(l \in L\).
  We can assume that \(\varphi(l) \neq l\) because 
  otherwise \(\rho(l, \varphi(l)) = \infty = R(l)\). 
  Given \(l \in L\), if there exists a \(w \in W\) with 
  \begin{displaymath}
    \Lambda(l,w) < \Lambda(\varphi(l),w)
  \end{displaymath}
  we have \(\Lambda(\varphi(l),w) \le R(l)\). By construction of
  \(\rho\), this implies that
  \begin{displaymath}
    \rho(l, \varphi(l)) \le R(l).
  \end{displaymath}
  If no such \(w \in W\) with 
  \begin{displaymath}
    \Lambda(l,w) < \Lambda(\varphi(l),w)
  \end{displaymath}
  exists, then \(\rho(l, \varphi(l)) = 0 \le R(l)\).
\end{proof}

Proposition~\ref{minimalityofcanonical} shows that the
\((\Lambda, \varphi)\)-restriction function is the minimal restriction function
for \(\Lambda\) relative to \(\varphi\). In the following example we
propose a strategy for construction of a parent function.
\begin{example}
  Let \(\Lambda \colon L \times W \to \rstar\) be a
  dissimilarity with \(L\) finite.
  As in Definition \ref{canonicalrestriction}, given \(l,l'\in L\) let
  \begin{displaymath}
    P(l,l') = \{ \Lambda(l',w) \, \mid \,
      w \in W \text{ with } \Lambda(l,w) < \Lambda(l',w) \}
  \end{displaymath}
  and define
    \begin{displaymath}
    \rho(l,l') =
    \begin{cases}
      \sup P(l,l') & \text{if \(P(l,l')\) is non-empty} \\
      0 & \text{if \(P(l,l') = \emptyset\).} 
    \end{cases}
  \end{displaymath}
  Define \(R_0 \colon L \to \rstar\) by
  \begin{displaymath}
    R_0(l) = \inf \{ \rho(l,l') \, \mid \, l' \in L
    \text{ and } l' \ne l\}
  \end{displaymath}
  and let \(<\) be a total order on \(L\) with minimal element \(l_0\)
  so that \(R_0(l') > R_0(l)\)
  implies \(l' < l\). Given \(l\in L\) let
  \begin{displaymath}
    Q(l) = \{l' \in L \, \mid \, l' < l \text{ and } \rho(l,l') = R_0(l)\}.
  \end{displaymath}
  If \(Q(l)\) is non-empty we define
  \begin{displaymath}
    \varphi(l) = \min Q(l).
  \end{displaymath}
  Otherwise, that is, if \(Q(l)\) is empty, we let
  \begin{displaymath}
    R_1(l) = \inf \{ \rho(l,l') \, \mid \, l' < l\}.
  \end{displaymath} and define
  \begin{displaymath}
    \varphi(l) = \min (\{l' \in L \, \mid \, l'< l \text{ and }
    \rho(l,l') = R_1(l) \} \cup \{l_0\}).
  \end{displaymath}
  Since
  \(\varphi(l) \le l\) for every \(l \in L\) and \(<\) is 
  a total order on \(L\), the function \(\varphi \colon L \to L\) 
  is a parent function.  We define \(R \colon L  \to \rstar\) 
  to be the restriction function for \(\Lambda\) relative
  to \(\varphi\) constructed in Definition \ref{canonicalrestriction}.

\end{example}
In the following two examples
we show that the 
sparsifications from \cite{SparseDowker, Sheehy2013} also are
\((R, \varphi)\)-nerves and that therefore the
\((\Lambda, \varphi)\)-restriction results in smaller nerves.
\begin{example}[Parent restriction]
  In \cite{SparseDowker} we constructed the {\em sparse filtered nerve}
  \(N\Lambda\) of a dissimilarity \(\Lambda \colon L \times W
  \to \rstar\) with \(L\) finite. 
  In this example we describe a function \(\varphi \colon L \to L\)
  and a restriction \(R\) for \(\Lambda\) relative to \(\varphi\) so that
  the \((R, \varphi)\)-nerve of \(\Lambda\) is equal to the sparse Dowker
  nerve in \cite[Definition 38]{SparseDowker}.
  Given \(l \in L\) we let
  \begin{displaymath}
    W_l = \{ w \in W \, \mid \, \Lambda(l,w) < \infty\}
  \end{displaymath}
  and
  \begin{displaymath}
    \widetilde \tau(l) = \sup\{ \Lambda(l,w) \, \mid \, w \in W_l\}.
  \end{displaymath}
  Let \(l_0 \in L\) and define \(\tau
  \colon L \to \rstar\) as the function 
  \begin{displaymath}
    \tau(l) =
    \begin{cases}
      \infty & \text{if \(\{l = l_0\}\)} \\
      \widetilde \tau(l) & \text{otherwise}.
    \end{cases}
  \end{displaymath}
  Given \(l \in L\) we let
  \begin{displaymath}
    Q(l) = \{l' \, \mid \, \tau(l') > \tau(l)\}.
  \end{displaymath}
  If \(Q(l)\) is non-empty we pick \(l' \in Q(l)\) and define
  \(\varphi(l) = l'\). Otherwise we define \(\varphi(l) =
  l_0\). 
  The {\em parent restriction} \(R \colon L \to \rstar\) is defined by
  \begin{displaymath}
    R(l) = \tau(\varphi(l)).
  \end{displaymath}

  It is readily verified that the above structure satisfies is a
  sparsification function for \(\Lambda\) with respect to \(\varphi\).
  The \(R\)-nerve \(N(\Lambda, R, \varphi)\) is the sparse Dowker nerve
  introduced in \cite[Definition 38]{SparseDowker}
\end{example}

\begin{example}[Sheehy restriction]
  \label{Sheehyex}
  Let \(\Lambda \colon L \times W \to \rstar\) be a dissimilarity
  with \(\Lambda(l,w) < \infty\) for all \((l,w) \in L \times W\) and
  satisfying the triangle inequality. The specific case we have in mind
  is \(L = W\) and \(d \colon L \times L \to (0,\infty)\) a metric. Let
  \(\alpha \colon \rnonneg \to \rnonneg\) be a translation function of
  the form \(\alpha = \id + \beta\) for an order preserving function
  \(\beta \colon \rnonneg \to \rnonneg\) so that the function
  \(\rstar \to [\beta(0),\infty)\) taking \(t\) to \(\beta(t)\) is
  bijective. Note that this implies that
  \(\beta^{\leftarrow} \beta = \id\).

  Let \(\lambda \colon L \to \rstar\) be the canonical insertion
  function for \(\Lambda\) as defined in \cite{SparseDowker} and let
  \(\varphi \colon L \to L\) be the associated parent function. That is, for
  \(L = \{l_0, \dots, l_n\}\), the function \(\lambda \colon L \to
  \rstar\) is defined by
    \begin{displaymath}
      \lambda(l) =
      \begin{cases}
        \infty & \text{ if \(l = l_0\)} \\
        \sup_{w \in W} \inf_{l' \in \{l_0, \dots, l_{k-1}\} }  \Lambda(l', w) & \text{ if \(l = l_k\)}
      \end{cases}
    \end{displaymath}
    and \(\varphi(l)\) is the smallest element in \(L\) such that
    there exists \(w \in W\) with
    \(\Lambda(\varphi(l), w) = \lambda(l)\). 
    Recall that the
    \((\lambda, \beta)\)-truncation of \(\Lambda\) is the
    dissimilarity \(\Gamma \colon L \times W \to \rstar\) defined in \cite{SparseDowker}
    by
 \begin{displaymath}
    \Gamma(l,w) =
    \begin{cases}
      \Lambda(l,w) & \text{if \(\Lambda(l,w)
        \le \alpha \beta^{\leftarrow}\lambda(l)\) and \(\beta(0)
        \le \lambda(l)\)} \\ 
      \infty & \text{otherwise}.
    \end{cases}
  \end{displaymath} 
  
  The \((\lambda, \beta)\)-truncation of \(\Lambda\) is not a
  truncation of \(\Lambda\) as 
  defined in Definition~\ref{truncatedDowkerDissimilarity}. However,
  the dissimilarity \(\Gamma' \colon L \times W \to \rstar\)
  described by the formula
  \begin{displaymath}
    \Gamma'(l,w) =
    \begin{cases}
      \Lambda(l,w) & \text{if \(\Lambda(l,w)
        < \alpha \beta^{\leftarrow} \lambda(l)\) } \\
      \infty & \text{otherwise}
    \end{cases}
  \end{displaymath}
  is a truncated dissimilarity and is smaller than \(\Gamma\). 
  Since elements \(l \in L\) with \(\lambda(l) < \beta(0)\) do not
  contribute to the Dowker nerve of \(\Gamma\) we assume without loss
  of generality that \(\beta(0) \le \lambda(l)\) for every
  \(l \in L\).
  
  The {\em Sheehy restriction function} is
  \begin{displaymath}
    S \colon L \to \rstar, \quad S(l) = \alpha^2 \beta^{\leftarrow} \lambda(l)
  \end{displaymath}
  Our assumption on \(l_0\) implies that \(S(l_0) = \infty\).
  The {\em Sheehy parent function} \(\varphi \colon L \to L\) is
  defined as follows: first we define \(\varphi(l_0) = l_0\) for the
  minimal element \(l_0\) in \(L\). Next,
  given \(l \ne l_0\), choose \(w' \in W\) so that 
  \((l,w') \in T\) and use that \(\lambda\) is a \(\beta\)-insertion
  function to choose \(l' \in 
  L\) so that 
  \begin{displaymath}
    \Lambda(l',w') \le \beta \alpha \beta^{\leftarrow} \lambda(l) < \lambda(l'),
  \end{displaymath}
  and define \(\varphi(l) = l'\). Since
  \begin{displaymath}
    \lambda(l) \le \beta \beta^{\leftarrow} \lambda(l) \le 
    \beta \alpha \beta^{\leftarrow} \lambda(l) < \lambda(l')
  \end{displaymath}
  we have \(S(\varphi(l)) > S(l)\) for every \(l\in L\) with \(S(l) < \infty\).

  Given \(w \in W\) with \(\Gamma'(l,w) < \infty\) we have 
  \(\Lambda(l,w) < \alpha
    \beta^{\leftarrow} \lambda(l),\)
  so for \(l'\) and \(w'\) as above the triangle inequality gives
  \begin{displaymath}
    \Lambda(l', w) \le \Lambda(l', w') + \Lambda(l, w) <
    \beta \alpha \beta^{\leftarrow} \lambda(l) + \alpha
    \beta^{\leftarrow} \lambda(l) = \alpha^{2} \beta^{\leftarrow} \lambda(l).
  \end{displaymath}
  The inequality \(\Lambda(l', w) \le \alpha^{2}
  \beta^{\leftarrow} 
  \lambda(l)\) implies 
  \begin{displaymath}
    \beta \alpha^{\leftarrow} \Lambda(l',w) \le 
    \beta \alpha^{\leftarrow} \alpha^2
    \beta^{\leftarrow} \lambda(l) \le
    \beta \alpha
    \beta^{\leftarrow} \lambda(l) < \lambda(l').
  \end{displaymath}
  Since \(\varphi(l) = l'\) we can conclude that
  \begin{displaymath}
    \Gamma'(\varphi(l),w) = \Lambda(\varphi(l), w) \le \alpha^{2} \beta^{\leftarrow}
    \lambda(l) = S(l)
  \end{displaymath}
  for every \(l \in L\). We conclude that \(\Gamma'(l,w) < \infty\)
  implies \(\Gamma'(\varphi(l),w) \le S(l)\). 
\end{example}

\begin{theorem}\label{sparsificationtheorem}
  Let \(\Lambda
  \colon L \times W \to \rstar\) be a
  dissimilarity and 
  let \(R\) be a restriction function for \(\Lambda\) relative to
  \(\varphi \colon L \to L\). If \(L\) is
  finite, then for every \(t
  \in \rstar\) the 
  geometric realization of the inclusion 
  \begin{displaymath}
    \iota \colon N(\Lambda, R, \varphi)(t) \to  N\Lambda_t
  \end{displaymath}
  is a homotopy equivalence.
\end{theorem}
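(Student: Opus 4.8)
Fix $t \in \rstar$. Call a vertex $l \in L$ \emph{$t$-active} if $R(l) \ge t$. By axiom (3) of a restriction function every $\varphi$-fixed point has $R$-value $\infty$, hence is $t$-active; by axiom (2) the value $R$ is non-decreasing along the chain $l, \varphi(l), \varphi^2(l), \dots$, which stabilises at a fixed point since $L$ is finite and $\varphi$ is a parent function. So every $l$ has a well-defined \emph{first $t$-active ancestor} $r(l) = \varphi^{k(l)}(l)$, where $k(l) = \min\{k \ge 0 \mid \varphi^k(l)\text{ is }t\text{-active}\}$; the resulting map $r \colon L \to L$ is idempotent and its image is exactly the set of $t$-active vertices. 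The plan is to prove that $r$ induces a simplicial map $\bar r \colon N\Lambda_t \to N(\Lambda, R, \varphi)(t)$ which, after geometric realisation, is a two-sided homotopy inverse of $\iota$, using that contiguous simplicial maps have homotopic realisations and that a finite chain of contiguities composes to a homotopy equivalence.

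\textbf{The map $\bar r$ and the easy direction.} The first step is the pointwise estimate $\Lambda(\varphi(l),w) \le \max\{\Lambda(l,w), R(l)\}$, valid for all $(l,w)$: if $\Lambda(l,w) \ge \Lambda(\varphi(l),w)$ this is trivial, and if $\Lambda(l,w) < \Lambda(\varphi(l),w)$ it is axiom (1). Iterating along the chain from $l$ to $r(l)$ gives $\Lambda(r(l),w) \le \max\{\Lambda(l,w), R(l), R(\varphi(l)), \dots, R(\varphi^{k(l)-1}(l))\}$; when $l$ lies in a simplex $\sigma$ with witness $w$ we have $\Lambda(l,w) < t$, and each intermediate ancestor $\varphi^{j}(l)$ with $j < k(l)$ is not $t$-active so $R(\varphi^{j}(l)) < t$, whence $\Lambda(r(l),w) < t$. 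Consequently the witness $w$ of $\sigma$ also witnesses $r(\sigma)$ in the sparse nerve: condition (1) is $\Lambda(r(l),w) < t$; condition (2) holds because every vertex of $r(\sigma)$ is $t$-active, so $\Lambda(r(l),w) < t \le R(r(l''))$; and condition (3) holds for the same reason, a slope point of $r(\sigma)$ being a $t$-active vertex of $r(\sigma)$. Thus $\bar r \colon N\Lambda_t \to N(\Lambda, R, \varphi)(t)$ is simplicial, and the same estimate shows $\sigma \cup r(\sigma) \in N\Lambda_t$ for every $\sigma \in N\Lambda_t$ (the witness $w$ satisfies $\Lambda(m,w) < t$ for all $m \in \sigma \cup r(\sigma)$), so $\iota \bar r$ is contiguous to $\id_{N\Lambda_t}$ and hence $\iota \bar r \simeq \id$ on realisations.

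\textbf{The reverse homotopy $\bar r \iota \simeq \id$.} Since $\sigma \cup r(\sigma)$ need not lie in the sparse nerve (condition (2) can fail for a vertex of $r(\sigma)$ relative to another vertex of $\sigma$), I would not argue by a single contiguity but factor $r$, on the non-active vertices, as a composite of single folds $r_{l_m} \circ \dots \circ r_{l_1}$, where $r_{l_i}$ sends $l_i$ to $\varphi(l_i)$ and fixes every other vertex, and where $l_1, \dots, l_m$ is a list of the non-$t$-active vertices with $R(l_1) \le \dots \le R(l_m)$, refined to a topological sort of the parent forest (possible because $R(l) \le R(\varphi(l))$). I would then interpolate by filtered subcomplexes $N(\Lambda, R, \varphi)(t) = K_0 \subseteq K_1 \subseteq \dots \subseteq K_m = N\Lambda_t$ of $N\Lambda_t$, where $K_i$ imposes the conditions defining the sparse nerve but with constraints (2) and (3) required only via the vertices $\{l_{i+1}, \dots, l_m\}$ together with the $t$-active vertices, and show that each $r_{l_i}$ induces a simplicial map $K_i \to K_{i-1}$ that is a contiguity inverse of the inclusion $K_{i-1} \hookrightarrow K_i$. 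Composing these gives that $\iota$ realises to a homotopy equivalence with inverse $\bar r$. The crucial input is that whenever $l_i$ lies in a simplex $\sigma$ of $K_i$, every other relevant vertex of $\sigma$ has $R$-value $\ge R(l_i)$ (the smaller-index vertices no longer impose constraints), so axioms (1) and (2) give $\Lambda(\varphi(l_i),w) \le \max\{\Lambda(l_i,w), R(l_i)\}$ bounded by the relevant $R$-values and by $R(\varphi(l_i)) \ge R(l_i)$, whence $(\sigma \setminus \{l_i\}) \cup \{\varphi(l_i)\} \in K_{i-1}$.

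\textbf{Main obstacle.} The single hard idea (the first-active-ancestor retraction) is elementary; the work is the combinatorial bookkeeping of the chain of folds — setting up the $K_i$ so that they are genuine filtered simplicial complexes, that $r_{l_i}$ restricts to a simplicial self-map, and that the "locally $R$-minimal vertex" phenomenon is available at every stage — together with the careful treatment of strict versus non-strict inequalities. The delicate case is the equality case, where a non-active value $R(l_i)$ meets the filtration parameter $t$; this is exactly what the strict inequality for slope points in condition (3) is designed to control, in the manner of Sheehy's original argument, and it is here that axioms (2) and (3) (as opposed to just (1)) are genuinely used. A viable alternative to the chain of contiguities is to produce, in the same $R$-increasing order, a single acyclic discrete Morse matching on the simplices of $N\Lambda_t$ not in $N(\Lambda, R, \varphi)(t)$ with no critical cells, but verifying acyclicity requires the same ordering discipline and the same inequality analysis.
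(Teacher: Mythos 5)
Your global strategy---define the ``first $t$-active ancestor'' retraction $r$ and show $\iota$ realises to a homotopy equivalence by factoring $r$ into single vertex folds---is sound in spirit, and your first two paragraphs are correct: the pointwise estimate $\Lambda(\varphi(l),w) \le \max\{\Lambda(l,w), R(l)\}$, the simplicial map $\bar r$, and the contiguity $\iota\bar r \simeq \id$ on $N\Lambda_t$ all work as you describe. But the interpolating complexes $K_i$ you propose do not support the folding maps you need, and the flaw is structural, not just bookkeeping.

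The gap is in the claim that $r_{l_i}$ is a simplicial map $K_i \to K_{i-1}$. When $l_i \in \sigma \in K_i$ and $\varphi(l_i) \notin \sigma$, the simplex $\tau = (\sigma\setminus\{l_i\}) \cup \{\varphi(l_i)\}$ must, to lie in $K_{i-1}$, satisfy $\Lambda(l',w) \le R(\varphi(l_i))$ for every $l' \in \tau$. Your $K_i$ has dropped the constraint via $l_i$, so for $l' \in \sigma\setminus\{l_i\}$ all you know is $\Lambda(l',w) < t$ and $\Lambda(l',w) \le R(l'')$ for the \emph{higher}-index $l''$ in $\sigma$; neither bounds $\Lambda(l',w)$ by $R(\varphi(l_i))$, which can be strictly below $t$. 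A concrete failure: take $L=\{a,b,c\}$, $\varphi(a)=a$, $\varphi(b)=a$, $\varphi(c)=b$, $R(a)=\infty$, $R(b)=6$, $R(c)=2$, $W=\{w\}$, $\Lambda(a,w)=\Lambda(b,w)=8$, $\Lambda(c,w)=9$, $t=10$. One checks $R$ is a restriction function for $\Lambda$ relative to $\varphi$ (condition (1) is vacuous here), the non-$t$-active vertices in $R$-increasing order are $l_1=c$, $l_2=b$, and $\{a,c\}\in K_1$ (the only binding condition is $\Lambda(a,w),\Lambda(c,w)<10$ since only $l'=a$ is relevant), yet $r_c(\{a,c\})=\{a,b\}\notin K_0 = N(\Lambda,R,\varphi)(t)$ because $\Lambda(a,w)=8>6=R(b)$. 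So your first fold already fails. Your estimate in the third paragraph only controls $\Lambda(\varphi(l_i),w)$, i.e.\ one direction of condition (2); you never address the direction $\Lambda(l',w) \le R(\varphi(l_i))$ that breaks.

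The paper avoids this by inducting on $|L|$ and shrinking the vertex set rather than interpolating between the sparse and full nerve on a fixed vertex set. At each step it picks the vertex $l_n$ of globally minimal $R$-value that is not in the image of $\varphi$, and compares the pair of nerves for $(\Lambda,R,\varphi)$ with the pair for the restrictions to $L'=L\setminus\{l_n\}$, showing both vertical inclusions in the resulting square are contiguity equivalences. Crucially, the fold $\sigma\mapsto(\sigma\setminus\{l_n\})\cup\{\varphi(l_n)\}$ is applied only to $\sigma$ in the \emph{honest} sparse nerve $N(\Lambda,R,\varphi)(t)$, where condition (2) with $l'=l_n$ supplies exactly the bound $\Lambda(l,w)\le R(l_n)\le R(\varphi(l_n))$ that your $K_i$ discard. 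Your retraction $r$ and your argument that $\iota\bar r \simeq \id$ on $N\Lambda_t$ survive, but the other half of the homotopy equivalence really does seem to require working on a shrinking vertex set (equivalently, never forgetting the constraint coming from the vertex currently being folded), not a nested family of sub-complexes of $N\Lambda_t$ of the form you wrote down.
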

\begin{proof}
  Since \(R(l) = \infty\) whenever \(\varphi(l) = l\), the two
  complexes agree when when \(L\) is 
  of cardinality \(1\), 
  and thus the result holds in this case. 
  Let \(t \in \rstar\) and let \(n > 1\).
  Below we will show that if \(\Lambda \colon L \times W \to \rstar\) is a
  dissimilarity with \(L\) a set of cardinality \(n\)
  and \(R\) is a restriction function for \(\Lambda\) relative to
  \(\varphi\) so that the inclusion  
  \begin{displaymath}
    \iota \colon N(\Lambda, R)(t) \to  N\Lambda_t
  \end{displaymath}
  is not a homotopy equivalence, then there exists a
  dissimilarity
  \(\Lambda' \colon L' \times W \to \rstar\) with \(L'\) a set of
  cardinality \(n -1\)
  and \(R'\) a restriction function for \(\Lambda'\) relative to a
  function \(\varphi' \colon L' \to L'\) so that the inclusion 
  \begin{displaymath}
    \iota \colon N(\Lambda', R')(t) \to  N\Lambda'_t
  \end{displaymath}
  is not a homotopy equivalence. Negating this we obtain the inductive
  step implying that the result holds for all finite sets \(L\).

  As above, 
  let \(\Lambda \colon L \times W \to \rstar\) be a
  dissimilarity with \(L\) a set of cardinality \(n > 1\)
  and let \(R\) be a restriction function for \(\Lambda\) relative to
  \(\varphi\). Fix \(t \in 
  \rstar\) and pick 
  \(l_n \in L\) 
  so that firstly \(R(l_n) \le R(l)\) for every \(l \in L\) and
  secondly \(l_n\) is not in the image of \(\varphi \colon L \to
  L\). This is possible 
  since \(L\) is finite and \(R(\varphi(l)) \le R(l)\) for every \(l
  \in L\).
  Suppose that 
  \begin{displaymath}
    \iota \colon N(\Lambda, R)(t) \to  N\Lambda_t
  \end{displaymath}
  is not a homotopy equivalence. Then there exists \(l \in L\) with
  \(R(l) < t\) since otherwise the two complexes are obviously
  equal. In particular \(R(l_n) < t\).  

  Let \(L' = L \setminus \{l_n\}\) and let \(\Lambda' \colon L' \times W
  \to \rstar\) be the restriction of \(\Lambda\) to \(L' \times W
  \subseteq L \times W\). Further, let \(R' \colon L' \to
  \rstar\) be the restriction of \(R\) to \(L'\) 
  and let \(\varphi' \colon L' \to L\) be the
  restriction of \(\varphi\) to \(L'\). Since \(l_n\) is not in the
  image of \(\varphi\) we can consider
  \(\varphi'\) as a function \(\varphi' \colon L' \to
  L'\). Clearly \(R'\) is a
  restriction function for \(\Lambda'\) relative to \(\varphi'\).
  
  We define \(f_t
  \colon L \to L'\) by 
  \begin{displaymath}
    f_t(l) =
    \begin{cases}
      \varphi(l_n) & \text{if \(l = l_n\)}\\
      l & \text{otherwise.}
    \end{cases}
  \end{displaymath}

  Given \(\sigma \in N\Lambda_t\) we claim that \(\sigma \cup
  f_t(\sigma) \in N\Lambda_t\). If \(l_n \notin \sigma\), then this
  claim is trivially satisfied. In order to justify the claim when
  \(l_n \in \sigma\) we pick  
  \(w \in W\) with \(\Lambda(l,w) < t\) for every \(l \in \sigma\). If
  \(\Lambda(l_n,w) \ge \Lambda(\varphi(l_n),w)\) then
  \(\Lambda(\varphi(l_n),w) < t\) and \(\sigma \cup f_t(\sigma) \in
  N\Lambda_t\). Otherwise by part (1) of
  Definition~\ref{sparsificationstructure} the inequalities
  \(\Lambda(l_n,w) < t\) and \(\Lambda(l_n,w) < 
  \Lambda(\varphi(l_n),w)\) 
  imply that \(\Lambda(\varphi(l_n),w) \le R(l_n) < t\).
  We conclude that \(\sigma \cup
  f_t(\sigma) \in N\Lambda_t\) also in this situation.

  Next we claim that
  \begin{displaymath}
    \sigma \in N(\Lambda, R, \varphi)(t) 
    \quad \text{ implies } \quad
    \sigma \cup f_t(\sigma) \in N(\Lambda, R, \varphi)(t).
  \end{displaymath}
  Again we only 
  need to consider the case \(l_n \in \sigma\). We have already shown
  that \(\sigma \cup f_t(\sigma) \in N \Lambda_t\). 
  Pick
  \(w \in W\) with \(\Lambda(l,w) < t\), \(\Lambda(l,w) < R(l)\) and
  \(\Lambda(l,w) \le R(l')\) 
  for every \(l, l' \in \sigma\). Note 
  in
  particular that 
  \(\Lambda(l_n,w) < t\). 
  If
  \(\Lambda(l_n,w) \ge \Lambda(\varphi(l_n),w)\) then
  \begin{displaymath}
    R(l) \ge R(l_n) > \Lambda(l_n, w) \ge \Lambda(\varphi(l_n),w)
  \end{displaymath}
  for all \(l \in L\),
  so \(\sigma \cup f_t(\sigma) \in N(\Lambda, R)(t)\). 
  On the other hand, if \(\Lambda(l_n,w) < \Lambda(\varphi(l_n),w)\)
  then by
  (1) of Definition~\ref{sparsificationstructure}
  the inequality \(\Lambda(l_n,w) < t\) 
  implies
  \(\Lambda(\varphi(l_n),w) \le R(l_n)
  \le R(l)\) for all \(l \in L\).
  If \(\varphi(l_n)\) is a slope point, then 
  \(\Lambda(\varphi(l_n),w) \le R(l_n) <
  R(\varphi(l_n))\).
  We conclude that \(\sigma \cup f_t(\sigma) \in N(\Lambda,
  R, \varphi)(t)\).

  We can now conclude that the function \(f_t \colon L
  \to L'\) 
  defines simplicial
  maps
  \begin{displaymath}
    f_t \colon N\Lambda_t \to N\Lambda'_t
  \end{displaymath}
  and
  \begin{displaymath}
    f_t \colon N(\Lambda, R, \varphi)(t) \to N(\Lambda',
    R', \varphi')(t). 
  \end{displaymath}
  On the other hand, the inclusion \(\iota \colon L' \to L\)
  defines simplicial maps
  \begin{displaymath}
    \iota \colon N\Lambda'_t \to N\Lambda_t
  \end{displaymath}
  and
  \begin{displaymath}
    \iota \colon N(\Lambda', R', \varphi')(t) \to N(\Lambda,
    R, \varphi)(t). 
  \end{displaymath}
  Moreover the above claims imply that the compositions
  \begin{displaymath}
    N\Lambda_t \xto {f_t} N\Lambda'_t \xto \iota N\Lambda_t
  \end{displaymath}
  and
  \begin{displaymath}
    N(\Lambda, R, \varphi)(t) \xto {f_t} N(\Lambda',
    R', \varphi')(t) \xto \iota 
    N(\Lambda, R, \varphi)(t) 
  \end{displaymath}
  are contiguous to identity maps. Since \(f_t \iota\) is the
  identity this implies that geometric realizations of the inclusions
  \begin{displaymath}
    N\Lambda'_t \xto \iota N \Lambda_t
  \end{displaymath}
  and
  \begin{displaymath}
    N(\Lambda', R', \varphi')(t) \xto \iota N(\Lambda, R, \varphi)(t) 
  \end{displaymath}
  are homotopy equivalences. Since we have assumed that
  the geometric realization of the inclusion
  \begin{displaymath}
	N(\Lambda, R, \varphi)(t) \xto \iota N\Lambda_t
  \end{displaymath}
  is not a homotopy equivalence, we can conclude that the geometric
  realization of the inclusion
  \begin{displaymath}
    N(\Lambda', R', \varphi')(t) \xto \iota N\Lambda'_t
  \end{displaymath}
  is not a homotopy equivalence, as desired.
\end{proof}

\medskip


\end{document}